\newtheorem{theorem}{Theorem}[section]
\newtheorem{lemma}[theorem]{Lemma}
\newtheorem{proposition}[theorem]{Proposition}
\newtheorem{corollary}[theorem]{Corollary}
\theoremstyle{definition}
\newtheorem{definition}[theorem]{Definition}
\newtheorem{example}[theorem]{Example}
\theoremstyle{remark}
\numberwithin{equation}{section}
\begin{document}

\title[$*$-frames for operators on Hilbert modules]
{$*$-frames for operators on Hilbert modules}

\author[ Janfada]{Mohammad Janfada}
\address{Mohammad Janfada, Department of Pure Mathematics, Ferdowsi University of Mashhad, Mashhad,  P.O. Box 1159-91775, Iran}
\email{mjanfada@gmail.com}
\author[ Dastourian]{Bahram Dastourian}
\address{Bahram Dastourian, Department of Pure Mathematics, Ferdowsi University of Mashhad, Mashhad, P.O. Box 1159-91775, Iran}
\email{bdastorian@gmail.com}

\thanks{2010 Mathematics Subject Classification: Primary 41A65; Secondary 42C15, 46H25}
\keywords{$*$-frame, Perturbations, Hilbert $C^*$-modules}


\begin{abstract}
$K$-frames were introduced by L. G\u{a}vru\c{t}a to study atomic systems on Hilbert spaces. Recently some generalizations of this concept are introduced and some of its difference with ordinary frames are studied. In this paper $*$-$K$-frames  are introduced and some properties of this generalization of  $K$-frames are studied. After proving some characterizations of $*$-$K$-frames, direct sum and tensor product of $*$-$K$-frames are considered and finally some perturbation results are established.
\end{abstract}

\maketitle

\section{Introduction and Preliminaries}
Frames in Hilbert spaces were introduced by J. Duffin and A.C. Schaffer \cite{Duf} in 1952. Now frames play an important role not only in the theoretics but also in many kinds of applications, and have been widely applied in signal processing \cite{Fer}, sampling \cite{Eld1, Eld2}, coding and communications \cite{Stro}, filter bank theory \cite{Dud}, system modelling \cite{Bol}, and so on.\\
In contrast to frames there exist systems of functions generating proper subspaces even though they do not belong to them. These families were considered by H.G. Feichtinger and T. Werther in \cite{FW} and namely families of local atoms. Let $\{x_n\}_{n\in \mathbb N}$ be in the Hilbert space $\mathcal H$ such that there exists positive real number $\mu>0$ with $\sum_{n\in \mathbb N} |\langle x, x_n\rangle|^2\leq \mu\|x\|^2$ for all $x\in \mathcal H$ and let $\mathcal H_0$ be a closed subspace of $\mathcal H$ then the sequence $\{x_n\}_{n\in \mathbb N}$ is called a family of local atoms for $\mathcal H_0$ if there exists a sequence of linear functionals $\{c_n\}_{n\in \mathbb N}$ such that\\
$(i)$ $\exists C>0$ with $\sum_{n\in \mathbb N}|c_n(x)|^2\leq \nu\|x\|^2$,\\
$(ii)$ $x=\sum_{n \in \mathbb N}c_n(x)x_n$,\\
for all $x$ in $\mathcal H_0$.\\
The motivation of these systems is given by the problems arising in sampling theory \cite{PS}.\\
Atomic systems and $K$-frames, where $K$ is a bounded linear operator on separable Hilbert space $\mathcal H$, introduced by L. G\u{a}vru\c{t}a in \cite{Guv} as a generalization of family local atoms.
A sequence $\{ x_n\}_{n\in \mathbb{N}}$ in the Hilbert space $\mathcal H$ is called an atomic system for $K$ if the following statements hold\\
$(i)$ the series $\sum_{n\in \mathbb{N}}c_n x_n$ converges for all $c = (c_n) \in l^2$;\\
$(ii)$ there exists positive real number $\nu> 0$ such that for every $x\in \mathcal H$ there exists $a_x = (a_n)\in l^2$ such that $\|a_x\|_{l^2}\leq \nu\|x\|$ and $Kx =\sum_{n\in \mathbb{N}}a_n x_n$.\\
Also a sequence $\{ x_n\}_{n\in \mathbb{N}}$ is said to be a $K$-frame for $\mathcal H$ if there exists positive real numbers $\lambda, \mu$ such that
\[ \lambda\| K^*x\|^2\leq\sum_{n\in \mathbb{N}}|\langle x, x_n\rangle|^2\leq \mu\|x\|^2,~~ x\in \mathcal H.\]
  Frames are a special case of $K$-frame when $K$ is the identity operator. It is proved that a sequence $\{x_n\}_{n\in \mathbb N}$ is an atomic system for $K$ if and only if it is a $K$-frame \cite{Guv}. Also in this paper it is proved that a family of local atoms for $H_0$ is indeed a $P_{H_0}$-frame, where $P_{H_0}$ is the orthogonal projection on $H_0$. The concept of $K$-frames and its properties has recently been studied in \cite{AR, JD, XZG, XZSD}.\\
Let $\mathcal A$ be a unital C$^*$-algebra and let $\mathcal H$ be a left $\mathcal A$-module. $\mathcal H$ is a pre-Hilbert $\mathcal A$-module if $\mathcal H$ is equipped with an $\mathcal A$-valued inner product $\langle ., .\rangle:\mathcal H\times \mathcal H\rightarrow \mathcal A$ that possesses the following properties, \\
$(i)$~ $\langle f, f\rangle \geq 0$, for all $f\in \mathcal H$ and $\langle f, f\rangle=0$ if and only if $f=0$;\\
$(ii)$~ $\langle Af+Bg, h\rangle=A\langle f, h\rangle+B\langle g, h\rangle$, for all $A, B\in \mathcal A$ and $f, g, h\in \mathcal H $;\\
$(iii)$~ $\langle f, g\rangle=\langle g, f\rangle^*$, for all $f, g\in \mathcal H $;\\
$(iv)$~ $\langle \mu f, g\rangle=\mu\langle f, g\rangle$, for all $\mu\in \mathbb C$ and $f, g\in \mathcal H $;\\
The action of $\mathcal A$ on $\mathcal H $ is $\mathbb C$- and $\mathcal A$-linear, i.e., $\mu(Af)=(\mu A)f=A(\mu f)$, for every $\mu \in \mathbb C$, $a \in A$ and $f \in \mathcal H $. For $f \in \mathcal H $, we define $\|f\| = \|\langle f, f\rangle\|^{\frac{1}{2}}$. If $\mathcal H $ is complete with $\|.\|$, it is called a Hilbert $\mathcal A$-module or a Hilbert C$^*$-module over $\mathcal A$.\\
 The C$^*$-algebra $\mathcal A$ itself can be recognized as a Hilbert $\mathcal A$-module with the inner product $\langle A, B\rangle=AB^*$, for any $A, B\in \mathcal A$. For a C$^*$-algebra $\mathcal A$ the standard Hilbert $\mathcal A$-module $\ell^2(\mathcal A)$ is defined by
 \[
 \ell^2(\mathcal A)=\{ \{A_j\}_{j\in \mathbb N} : \sum_{j\in \mathbb N}A_jA_j^* \ \ \mbox{converges in}\ \ \mathcal A\}
 \]
with $\mathcal A$-inner product $\langle \{A_j\}_{j\in \mathbb N}, \{B_j\}_{j\in \mathbb N} \rangle=\sum_{j\in \mathbb N}A_jB_j^*$.
Let $\mathcal H$ and $\mathcal K$ be two Hilbert modules over C$^*$-algebra $\mathcal A$. A map $T:\mathcal H \rightarrow \mathcal K $ is said to be adjointable if there exists a mapping $T^*:\mathcal K \rightarrow \mathcal H $ satisfying $\langle Tf, g\rangle=\langle f, T^*g\rangle$ where $f\in \mathcal H $ and $g\in \mathcal K $. The mapping $T^*$ is called the adjoint of $T$. Note that an adjointable operator is bounded. The set of all operators from the $\mathcal A$-module $\mathcal H$ to the $\mathcal A$-module $\mathcal K $ admitting an adjoint is denoted by $Hom^*_{\mathcal A}(\mathcal H, \mathcal K)$. The algebra $Hom^*_{\mathcal A}(\mathcal H)=Hom^*_{\mathcal A}(\mathcal H, \mathcal H)$ is a $C^*$-algebra. Let $\mathcal H $ be a Hilbert module over a C$^*$-algebra $\mathcal A$ and $T\in Hom^*_{\mathcal A}(\mathcal H)$, then one have
\[ \langle T(f), T(f)\rangle\leq \|T\|^2\langle f, f\rangle,\]
for every $f\in \mathcal H $ \cite{L, WO}. \\
Let $\mathcal H$ be a Hilbert C$^*$-module and $\mathcal M\subseteq \mathcal H$ be a closed submodule of a Hilbert module $\mathcal H$ We define the orthogonal complement $\mathcal M^{\perp}$  of $\mathcal M$ by
\[ \mathcal M^{\perp}=\{g\in \mathcal H: \langle f, g\rangle=0, \quad \forall f\in \mathcal M\}.\]
Then $\mathcal M^{\perp}$ is also a closed submodule of the Hilbert module $\mathcal H$. However the equality $\mathcal H=\mathcal M\oplus \mathcal M^{\perp}$ is not fulfilled in general (see\cite{L}). If this holds for close submodule $\mathcal M\subseteq \mathcal H$ then we say that $\mathcal M$ is orthogonally complemented. \\
In our study we need the following generalization of the so-called Douglas theorem \cite{Doug} for Hilbert modules.
\begin{theorem}\label{adj} \cite{FY}
Suppose that $\mathcal H $, $\mathcal H _1$ and $\mathcal H_2$ are Hilbert modules over a C$^*$-algebra $\mathcal A$. If $T\in Hom^*_{\mathcal A}(\mathcal H_1, \mathcal H)$ and $S\in Hom^*_{\mathcal A}(\mathcal H_2, \mathcal H)$ with $\overline{R(S^*)}$  orthogonally complemented, then the following are equivalent:\\
$(i)$ $R(T)\subseteq R(S)$;\\
$(ii)$ $\mu TT^*\leq SS^*$, for some positive real number $\mu> 0$;\\
$(iii)$ There exists positive real number $\lambda> 0$ such that $\lambda \|T^*f\|^2\leq \|S^*f\|^2$, for all $f\in \mathcal H$;\\
$(iv)$ There exists an adjointable operator $Q:\mathcal H_1 \rightarrow \mathcal H _2$ such that $T=SQ$.
\end{theorem}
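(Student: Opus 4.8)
The plan is to follow Douglas's classical argument through the cycle $(iv)\Rightarrow(i)$, $(i)\Rightarrow(iv)$, $(iv)\Rightarrow(ii)$, $(ii)\Leftrightarrow(iii)$, $(iii)\Rightarrow(i)$, while watching the two points where Hilbert $C^*$-modules misbehave relative to Hilbert spaces: a bounded module map need not be adjointable, and the orthogonal complement of a closed submodule need not split it off. Two implications are free. First, $(iv)\Rightarrow(i)$: if $T=SQ$ then $R(T)=R(SQ)\subseteq R(S)$. Second, $(iv)\Rightarrow(ii)$: $T=SQ$ gives $TT^*=SQQ^*S^*$, and since $\langle QQ^*g,g\rangle=\langle Q^*g,Q^*g\rangle\le\|Q\|^2\langle g,g\rangle$ (by the inequality recalled in the preliminaries) we have $QQ^*\le\|Q\|^2\,\mathrm{id}_{\mathcal H_2}$, hence $TT^*\le\|Q\|^2SS^*$, i.e.\ $(ii)$ with $\mu=\|Q\|^{-2}$ (when $Q=0$ one has $T=0$ and everything is trivial).

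Next I treat $(ii)\Leftrightarrow(iii)$ inside the $C^*$-algebra $Hom^*_{\mathcal A}(\mathcal H)$. The relation $\mu TT^*\le SS^*$ is the same as $\mu\langle T^*f,T^*f\rangle\le\langle S^*f,S^*f\rangle$ in $\mathcal A$ for every $f$; applying the $C^*$-norm (monotone on positive elements) and using $\|\langle g,g\rangle\|=\|g\|^2$ yields $(iii)$ with $\lambda=\mu$. For the converse, fix $\varepsilon>0$, note $SS^*+\varepsilon\,\mathrm{id}$ is invertible, and substitute $f=(SS^*+\varepsilon\,\mathrm{id})^{-1/2}g$ in $(iii)$: since $\langle S^*f,S^*f\rangle=\langle SS^*(SS^*+\varepsilon\,\mathrm{id})^{-1}g,g\rangle\le\langle g,g\rangle$ while $\langle T^*f,T^*f\rangle=\langle X_\varepsilon g,g\rangle$ with $X_\varepsilon:=(SS^*+\varepsilon\,\mathrm{id})^{-1/2}TT^*(SS^*+\varepsilon\,\mathrm{id})^{-1/2}\ge0$, one gets $\lambda\|\langle X_\varepsilon g,g\rangle\|\le\|g\|^2$ for all $g$, hence $\|X_\varepsilon\|\le\lambda^{-1}$, i.e.\ $X_\varepsilon\le\lambda^{-1}\mathrm{id}$; conjugating by $(SS^*+\varepsilon\,\mathrm{id})^{1/2}$ gives $\lambda TT^*\le SS^*+\varepsilon\,\mathrm{id}$, and $\varepsilon\downarrow0$ gives $(ii)$ with $\mu=\lambda$.

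The constructive core is $(i)\Rightarrow(iv)$. Orthogonal complementation gives $\mathcal H_2=\overline{R(S^*)}\oplus\overline{R(S^*)}^\perp$ with $\overline{R(S^*)}^\perp=\ker S$, so $S_0:=S|_{\overline{R(S^*)}}$ is injective and $R(S_0)=R(S)$. For $x\in\mathcal H_1$ there is then a unique $Qx\in\overline{R(S^*)}$ with $SQx=Tx$, defining an $\mathcal A$-linear $Q\colon\mathcal H_1\to\mathcal H_2$ with $SQ=T$; $Q$ is bounded by the closed graph theorem (Hilbert $C^*$-modules are Banach spaces: if $x_n\to x$ and $Qx_n\to z$ then $Sz=\lim Tx_n=Tx$ and $z\in\overline{R(S^*)}$, so $z=Qx$). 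Finally $Q$ is adjointable: from $R(T)\subseteq R(S)$ one gets $\ker S^*=R(S)^\perp\subseteq R(T)^\perp=\ker T^*$, so $S^*g\mapsto T^*g$ is well defined on $R(S^*)$; the identity $\langle Qx,S^*g\rangle=\langle SQx,g\rangle=\langle Tx,g\rangle=\langle x,T^*g\rangle$ with $x=T^*g$ gives $\|T^*g\|\le\|Q\|\,\|S^*g\|$, so this map extends to a bounded $\mathcal A$-linear $D\colon\overline{R(S^*)}\to\mathcal H_1$; extending $D$ by $0$ on $\overline{R(S^*)}^\perp$ yields $\widetilde Q\colon\mathcal H_2\to\mathcal H_1$, and splitting $y=y_1+y_2$ with $y_1\in\overline{R(S^*)}$, $y_2\in\ker S$ (and approximating $y_1$ within $R(S^*)$) gives $\langle Qx,y\rangle=\langle x,\widetilde Qy\rangle$, so $Q^*=\widetilde Q$.

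What remains, and what I expect to be the main obstacle, is closing the cycle with $(iii)\Rightarrow(i)$ (equivalently $(ii)\Rightarrow(i)$). Over $\mathbb C$ one would extend $S^*f\mapsto T^*f$ continuously over $\overline{R(S^*)}$ and by $0$ over $\overline{R(S^*)}^\perp$ to an operator $C$ with $CS^*=T^*$ and dualize to $SC^*=T$, whence $R(T)\subseteq R(S)$; but over a general $C^*$-algebra this $C$ need not be adjointable — indeed its adjointability is itself equivalent to $R(T)\subseteq R(S)$ — so the shortcut is unavailable. The plan is instead to argue analytically: with $A:=SS^*$ and $P$ the adjointable orthogonal projection onto $\overline{R(S^*)}$, the adjointable operators $Q_\varepsilon:=PS^*(A+\varepsilon\,\mathrm{id})^{-1}T$ satisfy the uniform bound $\|Q_\varepsilon\|\le\mu^{-1/2}$ (via $TT^*\le\mu^{-1}A$) together with $SQ_\varepsilon=T-\varepsilon(A+\varepsilon\,\mathrm{id})^{-1}T$, and one then uses the orthogonal complementation of $\overline{R(S^*)}$ to pass to the limit and extract an adjointable $Q$ with $SQ=T$. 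This passage from the analytic estimate in $(ii)$ to the geometric inclusion in $(i)$ is the delicate heart of the theorem, and it is exactly why the complementation hypothesis cannot be dropped (the equivalence genuinely fails without it, for instance over $\mathcal A=C[0,1]$); the remaining items — $\mathcal A$-linearity of the maps introduced, the identity $\ker S=\overline{R(S^*)}^\perp$, and the norm and order manipulations above — are routine.
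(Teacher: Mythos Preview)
The paper does not prove this theorem at all: Theorem~\ref{adj} is quoted from \cite{FY} as a preliminary tool, with no argument given. There is therefore no ``paper's own proof'' to compare your proposal against.

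On the substance of your attempt: the trivial implications $(iv)\Rightarrow(i)$ and $(iv)\Rightarrow(ii)$ are fine, your $(ii)\Leftrightarrow(iii)$ argument via conjugation by $(SS^*+\varepsilon)^{-1/2}$ is correct, and your $(i)\Rightarrow(iv)$ construction (define $Q$ via the injective restriction $S|_{\overline{R(S^*)}}$, get boundedness from the closed graph theorem, and build the adjoint by hand using the splitting $\mathcal H_2=\overline{R(S^*)}\oplus\ker S$) is the standard one and is sound. The genuine gap is precisely where you flag it: in $(ii)\Rightarrow(iv)$ you form $Q_\varepsilon=S^*(SS^*+\varepsilon)^{-1}T$, note the uniform bound $\|Q_\varepsilon\|\le\mu^{-1/2}$ and the identity $SQ_\varepsilon=T-\varepsilon(SS^*+\varepsilon)^{-1}T$, and then say one ``passes to the limit and extracts an adjointable $Q$.'' That last sentence is the entire difficulty. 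Uniform boundedness alone gives no compactness for adjointable operators between Hilbert $C^*$-modules, so you must actually prove convergence; and to show $SQ_\varepsilon\to T$ you need $\varepsilon(SS^*+\varepsilon)^{-1}Tx\to0$, which requires $R(T)\subseteq\overline{R(SS^*)}=\overline{R(S)}$, while from $(ii)$ you only get $R(T)\subseteq(\ker S^*)^\perp$, a possibly larger set in the module setting. You should either justify that containment (this is where the complementation of $\overline{R(S^*)}$ is used a second time) and then prove strong convergence of $Q_\varepsilon$ directly, or consult \cite{FY} for their argument.
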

Another version of Douglas theorem for Hilbert modules which is proved in \cite{ZH} is as follows.
\begin{theorem}\label{adj2}
Suppose that $\mathcal H$ and $\mathcal H _1$ are Hilbert module over a C*-algebra $\mathcal A$. If $T\in Hom^*_{\mathcal A}(\mathcal H)$ and $S:\mathcal H _1\rightarrow \mathcal H$ is adjointable operator, $R(S)$ is closed, then the following are equivalent:\\
$(i)$ $R(T)\subseteq R(S)$;\\
$(ii)$ $\lambda TT^*f\leq SS^*f$, $f\in \mathcal H$, for some $\lambda>0$;\\
$(iii)$ There exists an adjointable operator $Q:\mathcal H \rightarrow \mathcal H _1$ such that $T=SQ$.
\end{theorem}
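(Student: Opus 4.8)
The plan is to prove the three conditions equivalent by running the cycle $(iii)\Rightarrow(ii)\Rightarrow(i)\Rightarrow(iii)$; the first two implications are soft, and the last carries the real content. For $(iii)\Rightarrow(ii)$: if $T=SQ$ with $Q\in Hom^*_{\mathcal A}(\mathcal H,\mathcal H_1)$, then from $\langle Q^*g,Q^*g\rangle\le\|Q\|^2\langle g,g\rangle$ (the inequality recalled in the preliminaries) one has $QQ^*\le\|Q\|^2\,\mathrm{id}_{\mathcal H_1}$, and conjugating by $S$ gives $TT^*=S(QQ^*)S^*\le\|Q\|^2\,SS^*$. Evaluating this operator inequality at $f$ yields $(ii)$ with $\lambda=\|Q\|^{-2}$, the case $Q=0$ being trivial.

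For $(ii)\Rightarrow(i)$: putting $f$ into $(ii)$ and using $\langle TT^*f,f\rangle=\langle T^*f,T^*f\rangle$ and $\langle SS^*f,f\rangle=\langle S^*f,S^*f\rangle$, we read off $\lambda\langle T^*f,T^*f\rangle\le\langle S^*f,S^*f\rangle$, whence $\ker S^*\subseteq\ker T^*$. Since $S$ is adjointable with $R(S)$ closed, $R(S)$ is orthogonally complemented and $\mathcal H=R(S)\oplus\ker S^*$ (a standard fact for closed-range adjointable operators on Hilbert $C^*$-modules), so $(\ker S^*)^{\perp}=R(S)$. Combining this with $R(T)\subseteq\overline{R(T)}\subseteq(\ker T^*)^{\perp}\subseteq(\ker S^*)^{\perp}$ gives $R(T)\subseteq R(S)$.

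For $(i)\Rightarrow(iii)$: because $R(S)$ is closed, $S$ admits an adjointable generalized (Moore--Penrose) inverse $S^{\dagger}\in Hom^*_{\mathcal A}(\mathcal H,\mathcal H_1)$ for which $SS^{\dagger}$ equals the orthogonal projection $P$ of $\mathcal H$ onto $R(S)$. Since $(i)$ says $R(T)\subseteq R(S)=R(P)$, we have $PT=T$. Hence $Q:=S^{\dagger}T$ is adjointable, being a composite of adjointable maps, and $SQ=SS^{\dagger}T=PT=T$, which is $(iii)$. This closes the cycle.

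The main obstacle is not in the bookkeeping but in the two structural inputs invoked above for an adjointable operator $S$ with closed range: that $R(S)$ (and $R(S^*)$) is then orthogonally complemented with $\mathcal H=R(S)\oplus\ker S^*$, and that $S$ then possesses an adjointable generalized inverse $S^{\dagger}$ with $SS^{\dagger}$ the projection onto $R(S)$. These are exactly what let us replace the orthogonal-complementation hypothesis on $\overline{R(S^*)}$ used in Theorem \ref{adj}; once they are granted, every remaining step is a routine manipulation of adjoints and the $\mathcal A$-valued inner product. An alternative to citing $S^{\dagger}$ would be to define $Q^*$ directly on $R(S^*)\oplus\ker S$ by sending $S^*f\mapsto T^*f$ and $\ker S\mapsto 0$, but then verifying adjointability of $Q$ becomes the sticking point, so routing through the generalized inverse is cleaner.
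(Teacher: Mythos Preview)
Your argument is correct. The cycle $(iii)\Rightarrow(ii)\Rightarrow(i)\Rightarrow(iii)$ is sound, and the two structural inputs you invoke---that an adjointable operator with closed range has an orthogonally complemented range (so $\mathcal H=R(S)\oplus\ker S^*$) and admits an adjointable Moore--Penrose inverse---are standard results in the Hilbert $C^*$-module literature (see, e.g., Lance's book or the work of Xu and Sheng on generalized inverses). With those granted, each step is a clean manipulation.

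However, there is nothing to compare against: the paper does not prove Theorem~\ref{adj2}. It is stated in the preliminaries with the attribution ``which is proved in \cite{ZH}'' and is used as a black box throughout (in Lemma~\ref{norm2}, Proposition~\ref{propos}(iv), and the perturbation theorems). So your proposal supplies a complete self-contained proof where the paper offers only a citation. Your route through the Moore--Penrose inverse is the natural one and is essentially how such Douglas-type factorization results are established in the module setting; the alternative you mention (defining $Q^*$ by hand on $R(S^*)\oplus\ker S$) is exactly the classical Douglas argument transported to modules, and as you note the adjointability check there is where one ends up needing closed range anyway.
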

One can easily verify that each of the above conditions are also equivalent to the following condition,\\
$(iv)$ There exists positive real number $\mu>0$ such that $\|T^*f\|^2\leq \|S^*f\|^2$, $f\in \mathcal H$.\\
 Suppose that $\mathcal{A}$ and $\mathcal{B}$ two $C^*$-algebras, $\mathcal{H}$ is a Hilbert $\mathcal{A}$-module and also $\mathcal{K}$ is a Hilbert $\mathcal{B}$-module. Let $\mathcal{A}\otimes \mathcal{B}$ be the completion of $\mathcal{A}\otimes_{alg} \mathcal{B}$ with the spatial norm and the following operation and involution,
\[
(A\otimes B)(C\otimes D)=AC\otimes BD\quad , \quad (A\otimes B)^*=A^*\otimes B^*, \quad A\otimes B, C\otimes D\in \mathcal A\otimes \mathcal B.
\]
Then $\mathcal{A}\otimes \mathcal{B}$ is a $C^*$-algebra and for every $A \in \mathcal{A}$, $B\in \mathcal{B}$ we have $\|A \otimes B\| =\|A\|\|B\|$. The algebraic tensor product $\mathcal{H}\otimes_{alg} \mathcal{K}$  is a pre-Hilbert $\mathcal{A}\otimes \mathcal {B}$-module with the module action
\[(A \otimes B)(f \otimes g) = Af \otimes Bg~~~~~~\ \ (A \in \mathcal{A},~ B \in \mathcal{B}, f \in \mathcal{H}, g \in \mathcal{K}),\]
and $\mathcal{A} \otimes \mathcal{B}$-valued inner product
\[\langle f_1\otimes g_1, f_2 \otimes g_2\rangle = \langle f_1, f_2\rangle \otimes \langle g_1, g_2\rangle~~~\ \  (f_1, f_2 \in \mathcal{H},\ \ ~~ g_1, g_2 \in \mathcal{K}).\]
It is well-known that for $h = \sum_{i=1}^n f_i\otimes g_i$ in $\mathcal{H}\otimes_{alg} \mathcal{K}$ we have
\[\langle h, h\rangle = \sum_{i,j=1}^n\langle f_i, f_j\rangle \otimes \langle g_i, g_j\rangle\geq 0\]
and $\langle h, h\rangle = 0$ if and only if $h = 0$. The completion $\mathcal{H}\otimes\mathcal{K}$ of $\mathcal{H}\otimes_{alg}\mathcal {K}$, which is a Hilbert $\mathcal{A}\otimes\mathcal{B}$-module is called the tensor product of $\mathcal{H}$ and $\mathcal{K}$ (see \cite{L}). We note that if $A \in \mathcal{A}^+$ and $B \in\mathcal{B}^+$, then $A \otimes B\in (\mathcal A \otimes \mathcal B)^+$. If $A, B$ are hermitian elements of $\mathcal{A}$ and $A\leq B$, then for every positive element $C$ of $\mathcal{B}$, we have $A \otimes C \leq B \otimes C$.\\
If $T_1$ and $T_2$ are two maps on $\mathcal H$ and $\mathcal K$, respectively, then the tensor product $T_1$ and $T_2$ on $\mathcal H\otimes \mathcal K$ is defined by $(T_1\otimes T_2)(f\otimes g)=T_1f\otimes T_2g$ for $f\otimes g\in \mathcal H \otimes \mathcal K $. For $T_1\in Hom_{\mathcal A}^*(\mathcal H)$ and $T_2\in Hom_{\mathcal A}^*(\mathcal K)$,  it is a routine verification that $T_1^*\otimes T_2^*$ is the adjoint of $T_1\otimes T_2$, so in fact $T_1\otimes T_2\in Hom^*_{\mathcal A\otimes \mathcal B}(\mathcal H \otimes \mathcal K )$. For more details one can see chapter 4 of \cite{L}.\\
Many useful techniques in Hilbert spaces are either not available or not known in Hilbert C$^*$-modules. For example, it is well-known that every Hilbert space has an orthonormal basis but this is not true for every Hilbert C$^*$-modules and the analogue of the Riesz representation theorem for bounded $\mathcal A$-linear mapping is not valid for Hilbert $\mathcal A$-module $\mathcal H $ but this is true for self-dual Hilbert modules \cite{MS}. Note that a Hilbert $\mathcal A$-module $\mathcal H$ is called self-dual if $\mathcal H\cong\mathcal H^{'}$, where $\mathcal H^{'}$ is the set of all bounded $\mathcal A$-linear maps from $\mathcal H$ to $\mathcal A$.\\
The notion of frames for Hilbert spaces had been extended by Frank and Larson to the Hilbert C$^*$-modules and some properties of this frames were also investigated in \cite{FL1, FL2, WJ, KHKH}.\\
Let  $\mathcal A$ be a unital C$^*$-algebra and $J$ be a finite or countable index set. A  sequence $\{f_j\}_{j\in J}$ of elements in a Hilbert $\mathcal A$-module $\mathcal H $ is said to be a (standard) $*$-frame for $\mathcal H$
if there exists strictly nonzero elements $A$ and $B$ of $\mathcal A$ such that
\[
A\langle f, f\rangle A^* \leq \sum_{j\in J}\langle f, f_j\rangle\langle f_j, f\rangle \leq B\langle f, f\rangle B^*,
\]
holds for every $f\in \mathcal H$, where the sum in the middle of the inequality is convergent in norm (see \cite{AD}). The element $A$ is called the lower $*$-frame bound and $B$ called upper $*$-frame bound.  If the right side of this inequality holds for $\{f_j\}_{j\in J}$ then we say that $\{f_j\}_{j\in J}$ is a $*$-Bessel sequence. Trivially every frame for a Hilbert modules is $*$-frame.

If $\mathcal{A}=\mathbb{C}$ then the $*$-frame $\{f_j\}_{j\in J}$ is indeed a frame for the Hilbert module $\mathcal{H}$.  Note that a nonzero element $A$ in unital C$^*$-algebra $\mathcal A$ is called strictly nonzero if zero doesn't belong to $\sigma(A)$, where $\sigma(A)$ is the spectrum of the element $A$.

The following result of Jing in \cite{WJ} is useful for our study.
 \begin{lemma}\label{norm}
$\{f_j\}_{j\in J}$ is a frame of a finitely or countably generated Hilbert $\mathcal A$-module $\mathcal H $ over a unital C$^*$-algebra $\mathcal A$  with frame bounds $A, B$, respectively, if and only if
\[
A\|f\|^2 \leq \|\sum_{j\in J}\langle f, f_j\rangle\langle f_j, f\rangle\| \leq B\|f\|^2, \quad f\in \mathcal{H}.
\]
\end{lemma}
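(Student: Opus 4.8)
The plan is to prove the equivalence by treating the two implications separately; the forward one is immediate and the reverse one carries all the work. Throughout, abbreviate $S_f:=\sum_{j\in J}\langle f,f_j\rangle\langle f_j,f\rangle$, a sum of positive elements of $\mathcal A$ which is norm-convergent whenever it converges. For the implication from the frame inequality to the norm inequality, start from $A\langle f,f\rangle\le S_f\le B\langle f,f\rangle$. Since $0\le A\langle f,f\rangle\le S_f$ and the norm is monotone on the positive cone of any $C^*$-algebra, $A\|f\|^2=\|A\langle f,f\rangle\|\le\|S_f\|$; applying the same to $0\le S_f\le B\langle f,f\rangle$ gives $\|S_f\|\le B\|f\|^2$. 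That is the desired double inequality, and this direction does not use that $\mathcal H$ is countably generated.

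For the converse, assume $A\|f\|^2\le\|S_f\|\le B\|f\|^2$ for all $f\in\mathcal H$. First I would extract the upper operator bound. The right-hand estimate says each $S_f$ converges, so $\{\langle f,f_j\rangle\}_{j\in J}\in\ell^2(\mathcal A)$ and the analysis operator $T:\mathcal H\to\ell^2(\mathcal A)$, $Tf=\{\langle f,f_j\rangle\}_{j\in J}$, is a well-defined map with $\|Tf\|^2=\|S_f\|\le B\|f\|^2$; since $\mathcal H$ is finitely or countably generated, the standard argument shows that $T$ is adjointable, with adjoint the synthesis operator $\{c_j\}_{j\in J}\mapsto\sum_{j\in J}c_jf_j$. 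Hence $S:=T^*T\in Hom^*_{\mathcal A}(\mathcal H)$ is positive, $\langle Sf,f\rangle=\langle Tf,Tf\rangle=S_f$, and $\|S\|=\|T\|^2=\sup_{\|f\|\le1}\|S_f\|\le B$. Because a positive element $R$ of a $C^*$-algebra satisfies $R\le\|R\|\cdot 1$, we get $S\le B\cdot\mathrm{id}_{\mathcal H}$, that is, $S_f=\langle Sf,f\rangle\le B\langle f,f\rangle$ for all $f$.

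The real obstacle is the lower operator bound, where a scalar estimate must be upgraded to an operator inequality. From $A\|f\|^2\le\|S_f\|=\|\langle Sf,f\rangle\|\le\|Sf\|\,\|f\|$ one gets $\|Sf\|\ge A\|f\|$, so $S$ is bounded below and therefore has closed range. Since $S=S^*$, Theorem~\ref{adj2} (applied with $\mathrm{id}_{\mathcal H}$ in place of $T$ and $S$ in place of $S$, using $A^2\|f\|^2\le\|Sf\|^2$ and that $R(S)$ is closed) yields an adjointable $Q$ with $\mathrm{id}_{\mathcal H}=SQ$; thus $S$ is invertible in $Hom^*_{\mathcal A}(\mathcal H)$, and $\|Sf\|\ge A\|f\|$ forces $\|S^{-1}\|\le 1/A$. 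As $S^{-1}\ge 0$, this gives $S^{-1}\le\frac1A\,\mathrm{id}_{\mathcal H}$, and conjugating by $S^{1/2}$ yields $\mathrm{id}_{\mathcal H}\le\frac1A S$, i.e. $A\langle f,f\rangle\le\langle Sf,f\rangle=S_f$. Combining the two bounds, $A\langle f,f\rangle\le S_f\le B\langle f,f\rangle$, so $\{f_j\}_{j\in J}$ is a frame with bounds $A,B$. The step I expect to be most delicate is precisely this upgrade: the invertibility of $S$ rests on Hilbert-module facts (closed range of a bounded-below adjointable operator, together with the Douglas-type Theorem~\ref{adj2}) that have no counterpart in the easy direction; once $S^{-1}$ is known to be adjointable, the remaining $C^*$-algebra manipulation is routine.
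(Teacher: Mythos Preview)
The paper does not contain its own proof of this lemma; it is quoted as a result of Jing \cite{WJ} and stated without argument, so there is nothing in the paper to compare against directly.

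Your proof is correct. The forward direction is immediate from monotonicity of the $C^*$-norm on the positive cone, as you say. For the converse, your strategy is the natural one: from the upper norm bound you get that the analysis operator $T$ is bounded and adjointable, so $S=T^*T$ is a positive element of $Hom^*_{\mathcal A}(\mathcal H)$ with $\|S\|\le B$, hence $S\le B\cdot\mathrm{id}_{\mathcal H}$; from the lower norm bound you get $\|Sf\|\ge A\|f\|$, so $S$ is bounded below with closed range, and your appeal to Theorem~\ref{adj2} (with $T=\mathrm{id}_{\mathcal H}$) legitimately yields invertibility of $S$ in $Hom^*_{\mathcal A}(\mathcal H)$ with $\|S^{-1}\|\le 1/A$, whence $S^{-1}\le A^{-1}\mathrm{id}_{\mathcal H}$ and, after conjugation by $S^{1/2}$, $A\cdot\mathrm{id}_{\mathcal H}\le S$. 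One minor remark: the adjointability of $T$ follows from the norm-Bessel bound alone, via the Cauchy--Schwarz estimate showing that the synthesis operator is bounded on finitely supported sequences in $\ell^2(\mathcal A)$ and hence extends; the countable-generation hypothesis on $\mathcal H$ is not what makes that step work, though invoking it is harmless since it is part of the standing assumptions.
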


 Suppose that $\{f_j\}_{j\in J}$ is $*$-frame of a finitely or countably generated Hilbert $\mathcal A$-module $\mathcal H $ over a unital C$^*$-algebra $\mathcal A$. The operator $U:\ell^2(\mathcal A)\rightarrow \mathcal H $ defined by $U(\{g_j\}_{j\in J})=\sum_{j\in J}g_jf_j$ is called the  synthesis operator. The adjoint operator
 $U^*:\mathcal H \rightarrow \ell^2(\mathcal A)$ is given by $U^*(f)=\{\langle f, f_j\rangle\}_{j\in J}$ is called analysis operator. By composing $U$ and $U^*$ we obtain the $*$-frame operator $S:\mathcal H \rightarrow \mathcal H $ by $S(f)=UU^*(f)=\sum_{j\in J}\langle f, f_j\rangle f_j$ (see \cite{AD, OC}).\\
  In this paper we are going to study $*$-atomic system in Hilbert C$^*$-module. In section 2 we introduce a family of local $*$-atoms for $\mathcal H $ and an $*$-atomic system for an adjointable operator over $\mathcal H $, then their relations are studied. Next in section 3 we give some relations between $*$-atomic systems and a generalization of $K$-frames for Hilbert C$^*$-modules that namely $*$-$K$-frame. In section 4 some properties of $*$-$K$-frame are studied in particular $*$-$K$-frames for operators on tensor product of Hilbert modules are discussed. In the last section some perturbation results for $*$-$K$-frames are established.\\
  Throughout this paper $\mathcal H$, $\mathcal A$, $1_{\mathcal A}$, $\mathcal Z(\mathcal A)$, $U$ and $U^*$ denoted a finitely or countably generated Hilbert module, unital C$^*$-algebra, unit of the C$^*$-algebra $\mathcal A$, centre of the C*-algebra $\mathcal A$, synthesis and analysis operators of given $*$-frames, respectively. Given an operator $T$, we denote its range by $R(T)$.
\section{Local $*$-Atoms and $*$-Atomic Systems}
In this section first we introduce a family of local $*$-atoms for $\mathcal H $ and then a $*$-atomic systems for an adjointable operator over $\mathcal H$ as a generalization of local $*$-atoms will be considered and some of their properties are studied.
\begin{definition}
Suppose that $\mathcal H $ is a finitely or countably generated Hilbert module over a unital C$^*$-algebra $\mathcal A$ and $\{f_j\}_{j\in J}\subseteq \mathcal H $ is a $*$-Bessel sequence and $\mathcal H _0$ is a closed submodule of $\mathcal H $. $\{f_j\}_{j\in J}$ is called a family of local $*$-atoms for $\mathcal H _0$ if there exists a sequence $\{c_j\}_{j\in J}$ of adjointable operators $c_j : \mathcal H _0\rightarrow \mathcal A$ such that for every $f\in \mathcal H _0$ \\
$(i)$ there exists strictly nonzero element $C\in \mathcal A$ with $\sum_{j\in J} (c_j(f))(c_j(f))^* \leq C\langle f, f\rangle C^*$, \\
$(ii)$ $f=\sum_{j\in J}c_j(f)f_j$.
\end{definition}
Trivially every $*$-frame for the Hilbert module $\mathcal H$ is a local $*$-atom for $\mathcal H_0=\mathcal H$. But its converse isn't true in general. Note that if $\{f_j\}_{j\in J}$ is a $*$-frame for a Hilbert $\mathcal A$-module $\mathcal H$ so is local $*$-atom for $\mathcal H_0=\mathcal H$ with $c_j(f)=\langle f, S^{-1}f_j\rangle$, where $S$ is the $*$-frame operator.
\begin{proposition}
Let $\mathcal H _0$ be an orthogonally complemented submodule of $\mathcal H$. Suppose that \(\{f_j\}_{j\in J}\subseteq \mathcal H \) is a family of local $*$-atoms for $\mathcal H _0$ then $\{P_{\mathcal H _0}f_j\}_{j\in J}$ is a $*$-frame for $\mathcal H _0$, when $P_{\mathcal H _0}$ is the orthogonal projection of $\mathcal H $ onto $\mathcal H _0$.
\end{proposition}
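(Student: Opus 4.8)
The plan is to show that $\{P_{\mathcal H_0}f_j\}_{j\in J}$ satisfies the two $*$-frame inequalities on $\mathcal H_0$. For the upper bound, I would start from the hypothesis that $\{f_j\}_{j\in J}$ is a $*$-Bessel sequence in $\mathcal H$ with some Bessel bound $B$, so $\sum_{j\in J}\langle f,f_j\rangle\langle f_j,f\rangle\leq B\langle f,f\rangle B^*$ for all $f\in\mathcal H$. Since $P_{\mathcal H_0}$ is an adjointable (indeed self-adjoint) projection, for $f\in\mathcal H_0$ we have $\langle f,P_{\mathcal H_0}f_j\rangle=\langle P_{\mathcal H_0}f,f_j\rangle=\langle f,f_j\rangle$, so the analysis coefficients of $f$ against $\{P_{\mathcal H_0}f_j\}$ coincide with those against $\{f_j\}$; the upper bound $B$ is therefore inherited immediately, and in particular the middle sum converges in norm.

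For the lower bound, the key is to exploit the reconstruction identity $(ii)$: for $f\in\mathcal H_0$, $f=\sum_{j\in J}c_j(f)f_j$, whence $f=P_{\mathcal H_0}f=\sum_{j\in J}c_j(f)P_{\mathcal H_0}f_j$. I would then estimate $\langle f,f\rangle$ by pairing this expansion with $f$ and applying the Cauchy–Schwarz inequality for Hilbert $C^*$-modules:
\[
\langle f,f\rangle=\Big\langle \sum_{j}c_j(f)P_{\mathcal H_0}f_j,\,f\Big\rangle=\sum_j c_j(f)\langle P_{\mathcal H_0}f_j,f\rangle,
\]
and the operator-valued Cauchy–Schwarz bound gives
\[
\langle f,f\rangle\langle f,f\rangle^*\leq \Big(\sum_j c_j(f)c_j(f)^*\Big)\Big(\sum_j\langle f,P_{\mathcal H_0}f_j\rangle\langle P_{\mathcal H_0}f_j,f\rangle\Big).
\]
Using condition $(i)$, $\sum_j c_j(f)c_j(f)^*\leq C\langle f,f\rangle C^*$, and taking norms, one obtains
\[
\|f\|^4=\|\langle f,f\rangle\|^2\leq \|C\|^2\|f\|^2\,\Big\|\sum_j\langle f,P_{\mathcal H_0}f_j\rangle\langle P_{\mathcal H_0}f_j,f\rangle\Big\|,
\]
so that $\|C\|^{-2}\|f\|^2\leq\big\|\sum_j\langle f,P_{\mathcal H_0}f_j\rangle\langle P_{\mathcal H_0}f_j,f\rangle\big\|$ for all $f\in\mathcal H_0$. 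Combined with the upper bound, Lemma~\ref{norm} applied to the Hilbert module $\mathcal H_0$ (which is finitely or countably generated, being orthogonally complemented in $\mathcal H$) yields that $\{P_{\mathcal H_0}f_j\}_{j\in J}$ is a frame, hence a $*$-frame, for $\mathcal H_0$.

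The main obstacle is the passage from the scalar-type norm estimate back to an honest operator-valued $*$-frame inequality with a strictly nonzero lower bound in $\mathcal A$. The Cauchy–Schwarz step only controls $\langle f,f\rangle\langle f,f\rangle^*$, not $\langle f,f\rangle$ directly by the analysis sum, so one cannot in general write $A\langle f,f\rangle A^*\leq\sum_j\langle f,P_{\mathcal H_0}f_j\rangle\langle P_{\mathcal H_0}f_j,f\rangle$ elementwise without extra hypotheses; the clean route is to pass to norms and invoke Lemma~\ref{norm}, which is exactly why that lemma was recorded above. A minor technical point to check along the way is that convergence of $\sum_j c_j(f)f_j$ together with boundedness of $P_{\mathcal H_0}$ justifies interchanging $P_{\mathcal H_0}$ with the sum, and that $\sum_j c_j(f)c_j(f)^*$ converges in norm so that condition $(i)$ is meaningful; both follow from standard Hilbert $C^*$-module estimates.
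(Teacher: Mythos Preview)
Your approach is essentially the same as the paper's: use the reconstruction identity $f=\sum_j c_j(f)f_j$, apply Cauchy--Schwarz in $\ell^2(\mathcal A)$ together with condition~$(i)$ to get the norm inequality $\|f\|^2\leq\|C\|^2\big\|\sum_j\langle f,P_{\mathcal H_0}f_j\rangle\langle P_{\mathcal H_0}f_j,f\rangle\big\|$, and then invoke Lemma~\ref{norm} to upgrade to a genuine $*$-frame lower bound (the paper obtains $\tfrac{1_{\mathcal A}}{\|C\|}$). One cosmetic correction: your displayed ``operator-valued Cauchy--Schwarz'' inequality $\langle f,f\rangle\langle f,f\rangle^*\leq\big(\sum_j c_j(f)c_j(f)^*\big)\big(\sum_j\langle f,P_{\mathcal H_0}f_j\rangle\langle P_{\mathcal H_0}f_j,f\rangle\big)$ is not well-posed, since the right-hand side is a product of two positive elements and need not even be self-adjoint; the correct step---which you in fact use immediately afterward and which the paper writes directly---is the norm form $\|\sum_j c_j(f)\langle f_j,f\rangle\|^2\leq\|\sum_j c_j(f)c_j(f)^*\|\,\|\sum_j\langle f,f_j\rangle\langle f_j,f\rangle\|$.
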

\begin{proof}
It is enough to show that $\{P_{\mathcal H _0}f_j\}_{j\in J}$ has a lower $*$-frame bound. By definition of local $*$-atoms there exists a sequence $\{c_j\}_{j\in J}$ of adjointable operators $c_j : \mathcal H _0\rightarrow \mathcal A$ such that $f=\sum_{j\in J}c_j(f)f_j$ and also there exists strictly nonzero element $C\in \mathcal A$ with $\sum_{j\in J} (c_j(f))(c_j(f))^* \leq C\langle f, f\rangle C^*$, for every $f\in \mathcal H _0$. Thus for every $f\in \mathcal H _0$,
\begin{eqnarray*}
||f||^4&=&||\langle \sum_{j\in J}c_j(f)f_j, f\rangle||^2\\
&=&||\sum_{j\in J}c_j(f)\langle f_j, f\rangle||^2\\
&\leq&||\sum_{j\in J}(c_j(f))(c_j(f))^*||~||\sum_{j\in J}\langle f, f_j\rangle \langle f_j, f\rangle||\\
&\leq&\|C\|^2||f||^2||\sum_{j\in J}\langle f, f_j\rangle \langle f_j, f\rangle||\\
&=& \|C\|^2 \|f\|^2 \|\sum_{j\in J}\langle P_{\mathcal H_0}f, f_j\rangle \langle f_j, P_{\mathcal H_0}f\rangle\|\\
&=& \|C\|^2 \|f\|^2 \|\sum_{j\in J}\langle f, P_{\mathcal H_0}f_j\rangle \langle P_{\mathcal H_0}f_j, f\rangle\|.
\end{eqnarray*}
Hence by Lemma \ref{norm}
\[\frac{1_{\mathcal A}}{\|C\|}\langle f, f\rangle (\frac{1_{\mathcal A}}{\|C\|})^*\leq \sum_{j\in J}\langle f, P_{\mathcal H_0}f_j\rangle \langle P_{\mathcal H_0}f_j, f\rangle.\]
 So  $\{P_{\mathcal H _0}f_j\}_{j\in J}$ is a $*$-frame for $\mathcal H _0$ with the lower frame bound $\frac{1_{\mathcal A}}{\|C\|}$.
\end{proof}
Here $*$-atomic system, a generalization of atomic systems for operators, is introduced.
\begin{definition}
Let $\mathcal H $ be a finitely or countably generated Hilbert module over a unital C$^*$-algebra $\mathcal A$ with unit $1_{\mathcal A}$ and $K\in Hom^*_{\mathcal A}(\mathcal H)$. We say that $\{f_j\}_{\j\in J}\subseteq \mathcal H $ is a $*$-atomic system for $K$ if $\{f_j\}_{\j\in J}$ is a $*$-Bessel sequence and there exists a strictly nonzero $C\in \mathcal A$ such that for all $f\in \mathcal H $ there is
$a_f=\{a_j\}_{j\in J}\in \ell^2(\mathcal A)$ such that $K(f)=\sum_{j\in J}a_jf_j$ and $\langle a_f, a_f\rangle\leq C\langle f, f\rangle C^*$.
\end{definition}
\begin{example} (See \cite{AD})
Let $\ell^{\infty}$ be the unitary C$^*$-algebra of all bounded complex-valued sequences. Let $c_0$ be the set of all sequences converging to zero. Then $c_0$ is a Hilbert $\ell^{\infty}$-module with $\ell^{\infty}$-valued inner product $\langle u, v\rangle=\{u_i\overline{v_i}\}_{i\in \mathbb N}$, for $u, v\in c_0$. For any $j\in \mathbb N$ let $f_j=\{f_i^j\}_{i\in \mathbb N}\in c_0$ be defined by
\begin{equation*}
 f_i^j=\left\{\begin{array}{l c} \frac{1}{3}+\frac{1}{i}& i=j\\ 0& i\neq j\end{array}\right. , \quad j\in \mathbb N
\end{equation*}
and define $K:c_0\to c_0$ by $K(u)=\{\sum_{j\in \mathbb N} u_i|f_i^j|^2\}_{i\in \mathbb N}$ for $u=\{u_i\}_{i\in \mathbb N}\in c_0$, then $K\in Hom_{\ell^{\infty}}^*(c_0)$. So $\{f_j\}_{j\in \mathbb N}$ is a $*$-atomic system for $c_0$. Note that in this case $a_f=\{u\overline{f_j}\}_{j\in \mathbb N}\in \ell^2(\ell^{\infty})$ and $C=\{\frac{1}{3}+\frac{1}{i}\}_{i\in \mathbb N}\in \ell^{\infty}$, since
\[ K(u)=\{\sum_{j\in \mathbb N} u_i|f_i^j|^2\}_{i\in \mathbb N}=\sum_{j\in \mathbb N}\{u_i|f_i^j|^2\}_{i\in \mathbb N}=\sum_{j\in J}\{u_i\overline{f_i^j}\}_{i\in \mathbb N}f_j=\sum_{j\in J}u\overline{f_j}f_j\]
and
 \[\langle a_f, a_f\rangle=\sum_{j\in J}u\overline{f_j}f_j\overline{u}=\{|u_i|^2(\frac{1}{3}+\frac{1}{i})^2\}_{i\in \mathbb N}=\{\frac{1}{3}+\frac{1}{i}\}_{i\in \mathbb N}\langle u, u\rangle \{\frac{1}{3}+\frac{1}{i}\}_{i\in \mathbb N}.\]
\end{example}
In the following proposition, which can be proved easily, some relations of families of local $*$-atoms and  $*$-atomic systems are stated.
\begin{proposition}
Let $\mathcal H _0$ be an orthogonally complemented submodule of $\mathcal H$ and $\{f_j\}_{j\in J}\subseteq \mathcal H $ be a $*$-Bessel sequence then the following are equivalent.\\
$(i)$ $\{f_j\}_{j\in J}$ is a family of local $*$-atoms for $\mathcal H _0$.\\
$(ii)$ $\{f_j\}_{j\in J}$ is a $*$-atomic system for $P_{\mathcal H _0}$, where $P_{\mathcal H _0}$ is an orthogonal projection from $\mathcal H $ onto $\mathcal H _0$.
\end{proposition}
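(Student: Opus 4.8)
The plan is to prove the two implications separately; $(i)\Rightarrow(ii)$ is routine, and essentially all of the content sits in $(ii)\Rightarrow(i)$.

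For $(i)\Rightarrow(ii)$, let $\{c_j\}_{j\in J}$ (with $c_j:\mathcal H_0\to\mathcal A$ adjointable) and the strictly nonzero $C\in\mathcal A$ witness that $\{f_j\}_{j\in J}$ is a family of local $*$-atoms for $\mathcal H_0$. Given an arbitrary $f\in\mathcal H$, I would set $a_f:=\{c_j(P_{\mathcal H_0}f)\}_{j\in J}$, which makes sense because $P_{\mathcal H_0}f\in\mathcal H_0$. Applying conditions $(i)$ and $(ii)$ of the local-atom definition to the vector $P_{\mathcal H_0}f$ gives $\sum_{j\in J}c_j(P_{\mathcal H_0}f)f_j=P_{\mathcal H_0}f$ and $\langle a_f,a_f\rangle=\sum_{j\in J}c_j(P_{\mathcal H_0}f)(c_j(P_{\mathcal H_0}f))^*\le C\langle P_{\mathcal H_0}f,P_{\mathcal H_0}f\rangle C^*$. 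Since $P_{\mathcal H_0}$ is an orthogonal projection we have $\|P_{\mathcal H_0}\|\le 1$, so the module inequality $\langle Tf,Tf\rangle\le\|T\|^2\langle f,f\rangle$ recalled in the preliminaries gives $\langle P_{\mathcal H_0}f,P_{\mathcal H_0}f\rangle\le\langle f,f\rangle$, hence $\langle a_f,a_f\rangle\le C\langle f,f\rangle C^*$ and in particular $a_f\in\ell^2(\mathcal A)$. Together with the standing $*$-Bessel hypothesis this exhibits $\{f_j\}_{j\in J}$ as a $*$-atomic system for $P_{\mathcal H_0}$ (with the same constant $C$).

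For $(ii)\Rightarrow(i)$, the job is to upgrade the a priori merely pointwise atomic decompositions to genuine adjointable coefficient operators. First I would introduce the synthesis operator $U:\ell^2(\mathcal A)\to\mathcal H$ of the $*$-Bessel sequence $\{f_j\}_{j\in J}$, which is adjointable. Specializing condition $(ii)$ to vectors $f\in\mathcal H_0$, where $P_{\mathcal H_0}f=f$, shows $\mathcal H_0\subseteq R(U)$; since $\mathcal H_0=R(\iota)$ for the inclusion $\iota:\mathcal H_0\hookrightarrow\mathcal H$ (which is adjointable, with adjoint $P_{\mathcal H_0}$), this is exactly $R(\iota)\subseteq R(U)$. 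I would then apply the Douglas-type factorization of Theorem~\ref{adj} with $T=\iota$ and $S=U$ to obtain an adjointable $Q:\mathcal H_0\to\ell^2(\mathcal A)$ with $UQ=\iota$. Writing $\pi_j:\ell^2(\mathcal A)\to\mathcal A$ for the $j$-th coordinate evaluation $\{g_k\}_{k\in J}\mapsto g_j$ — which is adjointable, $\pi_j^*(A)$ being the sequence with $A$ in slot $j$ and $0$ elsewhere — I would set $c_j:=\pi_j\circ Q:\mathcal H_0\to\mathcal A$. Then for every $f\in\mathcal H_0$ one has $\sum_{j\in J}c_j(f)f_j=U(Qf)=\iota(f)=f$ and $\sum_{j\in J}c_j(f)(c_j(f))^*=\langle Qf,Qf\rangle\le\|Q\|^2\langle f,f\rangle=(\|Q\|1_{\mathcal A})\langle f,f\rangle(\|Q\|1_{\mathcal A})^*$, where $\|Q\|1_{\mathcal A}$ is strictly nonzero (the degenerate case $\mathcal H_0=\{0\}$ being trivial, with $c_j\equiv 0$). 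With the $*$-Bessel hypothesis this makes $\{f_j\}_{j\in J}$ a family of local $*$-atoms for $\mathcal H_0$.

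The hard part is making the Douglas-type step legitimate: one must check that its hypotheses hold, i.e.\ that $\overline{R(U^*)}$ is orthogonally complemented if one uses Theorem~\ref{adj}, or that $R(U)$ is closed if one uses Theorem~\ref{adj2}, and it is precisely for such range and complementation matters that the hypothesis that $\mathcal H_0$ be orthogonally complemented — which in any case is what allows us to speak of $P_{\mathcal H_0}$ and to use $\|P_{\mathcal H_0}\|\le 1$ — enters. Everything else — norm-convergence of the series involved, adjointability of the coordinate evaluations, and persistence of the $\mathcal A$-valued bounds — is routine bookkeeping.
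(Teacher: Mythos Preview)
The paper does not actually give a proof of this proposition: it says only that it ``can be proved easily,'' so there is no line-by-line comparison to make. Your $(i)\Rightarrow(ii)$ argument is clean and correct; the content issue is in $(ii)\Rightarrow(i)$.

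There is a genuine gap in your $(ii)\Rightarrow(i)$, and you have half-identified it yourself. You want to apply Theorem~\ref{adj} (or~\ref{adj2}) to $T=\iota$ and $S=U$, and for that you need either $\overline{R(U^*)}$ orthogonally complemented in $\ell^2(\mathcal A)$ or $R(U)$ closed in $\mathcal H$. You then write that ``it is precisely for such range and complementation matters that the hypothesis that $\mathcal H_0$ be orthogonally complemented \dots\ enters.'' That is not right: the assumption that $\mathcal H_0$ is orthogonally complemented is a statement about a submodule of $\mathcal H$ and is exactly what is needed to have the projection $P_{\mathcal H_0}$ (and to make $\iota$ adjointable with $\iota^*=P_{\mathcal H_0}$). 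It says nothing about $\overline{R(U^*)}\subseteq\ell^2(\mathcal A)$ or about closedness of $R(U)$, which depend only on the Bessel sequence $\{f_j\}$, not on $\mathcal H_0$. So as written you have not verified the hypothesis of the Douglas-type theorem, and there is no reason it should hold for an arbitrary $*$-Bessel sequence.

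This is not merely a cosmetic point. Notice that when the paper itself proves the analogous passage from ``$*$-atomic system'' to an operator factorization (Theorem~\ref{main} and Corollary~\ref{main1}), it explicitly \emph{adds} the standing hypothesis that $\overline{R(U^*)}$ is orthogonally complemented. That strongly suggests this hypothesis is genuinely needed for the factorization step and is not something you can extract from the data in the present proposition. In other words, either the proposition as stated is relying on a reading of ``local $*$-atoms'' that does not insist on \emph{adjointable} coefficient maps $c_j$ (in which case $(ii)\Rightarrow(i)$ really is immediate: just take $c_j(f)$ to be the $j$-th entry of the $a_f$ guaranteed by the atomic-system definition for $f\in\mathcal H_0$, and nothing beyond the bound $\langle a_f,a_f\rangle\le C\langle f,f\rangle C^*$ is claimed), or the paper is glossing over exactly the complementation hypothesis it later makes explicit. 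Your write-up should flag this rather than suggest that the $\mathcal H_0$-complementation assumption resolves it.
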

\section{$*$-Atomic Systems and $*$-$K$-Frames}
In this section we study a $*$-atomic systems and its relation with $*$-$K$-frames on Hilbert modules discussed.
\begin{definition}
Let $K\in Hom^*_{\mathcal A}(\mathcal H)$. A sequence $\{f_j\}_{j\in J}\subseteq \mathcal H$ is called a $*$-$K$-frame If there exists strictly nonzero $A, B\in \mathcal A$ such that
\begin{eqnarray}\label{AB}
A\langle K^*f, K^*f\rangle A^* \leq \sum_{j\in J}\langle f, f_j\rangle\langle f_j, f\rangle\leq B\langle f, f\rangle B^*,
\end{eqnarray}
holds for every $f\in \mathcal H$.
\end{definition}
Every $*$-frames is a $*$-$K$-frames. Indeed for any $K\in Hom^*_{\mathcal A}(\mathcal H)$ we have
\begin{eqnarray}\label{kk}
\langle K^*f, K^*f\rangle\leq\|K\|^2 \langle f, f\rangle, \quad f\in \mathcal H
\end{eqnarray}
Now if $\{f_j\}_{j\in J}$ is a $*$-frame with bounds $A$ and $B$ then by \eqref{kk} and the fact that for $A, B\in \mathcal A$ the inequality $A\leq B$ implies that $CAC^*\leq CBC^*$, for $C\in \mathcal A$, we have
 \[(A\|K\|^{-1})\langle K^*f, K^*f\rangle (A\|K\|^{-1})^*\leq A\langle f, f\rangle A^*\leq\sum_{j\in J}\langle f,f_j\rangle\langle f_j, f\rangle\leq B\langle f, f\rangle B^*.\]
 Therefore $\{f_j\}_{j\in J}$ is a $*$-$K$-frame with $*$-frame bounds $A\|K\|^{-1}$ and $B$.\\
\begin{lemma}\label{norm1}
Suppose that $\mathcal H$ is a Hilbert modules over a C$^*$-algebra $\mathcal A$. If $\{f_j\}_{j\in J}$ is a $*$-$K$-frame with $*$-frame bounds $A$ and $B$ then
\begin{eqnarray}\label{normeq}
\|AK^*f\|^2\leq \|\sum_{j\in J}\langle f, f_j\rangle\langle f_j, f\rangle\|\leq \|Bf\|^2, \quad f\in \mathcal H.
\end{eqnarray}
For the converse suppose that \eqref{normeq} holds, for any $A, B\in \mathcal Z(\mathcal A)$. Let $T:\mathcal H\to \ell^2(\mathcal A)$ defined by $Tf=\{\langle f, f_j\rangle\}_{j\in J}$ and $\overline{R(T)}$ be orthogonally complemented. Then $\{f_j\}_{j\in J}$ is a $*$-$K$-frame with $*$-frame bounds $mA$ and $nB$, for some positive real numbers $m, n$.
\end{lemma}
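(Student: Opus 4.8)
The plan is to prove the two implications separately. The forward one is immediate: for positive elements $a\le b$ of $\mathcal A$ one has $\|a\|\le\|b\|$, and for any $C\in\mathcal A$ and $h\in\mathcal H$ one has $\langle Ch,Ch\rangle=C\langle h,h\rangle C^{*}$, hence $\|C\langle h,h\rangle C^{*}\|=\|Ch\|^{2}$. Applying this with $C=A,\ h=K^{*}f$ and with $C=B,\ h=f$, and taking norms throughout the defining inequality \eqref{AB}, turns \eqref{AB} into \eqref{normeq} at once; note that no centrality is needed here.

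For the converse I would first translate the norm data of \eqref{normeq} into statements about adjointable operators. Take the analysis operator $T$ to be adjointable (as is implicit in the statement) and let $U:=T^{*}:\ell^{2}(\mathcal A)\to\mathcal H$, $U(\{g_{j}\}_{j})=\sum_{j}g_{j}f_{j}$, be the synthesis operator, so that $\langle UU^{*}f,f\rangle=\langle U^{*}f,U^{*}f\rangle=\sum_{j\in J}\langle f,f_{j}\rangle\langle f_{j},f\rangle$ and $\|U^{*}f\|^{2}=\|\sum_{j\in J}\langle f,f_{j}\rangle\langle f_{j},f\rangle\|$. Since $A,B\in\mathcal Z(\mathcal A)$ are strictly nonzero, they are invertible, and the left-multiplication maps $\widetilde A,\widetilde B\in Hom^{*}_{\mathcal A}(\mathcal H)$ (by $A$ and $B$) are adjointable and invertible with $\widetilde A^{*}=\widetilde{A^{*}}$, $\widetilde B^{*}=\widetilde{B^{*}}$ --- this is exactly where centrality enters. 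A short computation, using that central elements are normal, gives $\|\widetilde{B^{*}}f\|^{2}=\|B^{*}\langle f,f\rangle B\|=\|B\langle f,f\rangle B^{*}\|=\|Bf\|^{2}$; and, putting $V:=K\widetilde{A^{*}}\in Hom^{*}_{\mathcal A}(\mathcal H)$, one has $V^{*}=\widetilde AK^{*}$, so $V^{*}f=AK^{*}f$ and $\|V^{*}f\|=\|AK^{*}f\|$. Thus \eqref{normeq} becomes exactly $\|V^{*}f\|^{2}\le\|U^{*}f\|^{2}\le\|\widetilde{B^{*}}f\|^{2}$ for all $f\in\mathcal H$.

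Then I would apply Theorem~\ref{adj} twice, with common range space $\mathcal H$. For the lower bound, take $V$ in the role of $T$ and $U$ in the role of $S$: the hypothesis needed is that $\overline{R(U^{*})}=\overline{R(T)}$ be orthogonally complemented, which is assumed, so condition $(iii)$ holds (with $\lambda=1$) and condition $(ii)$ yields $\mu_{1}VV^{*}\le UU^{*}$ for some $\mu_{1}>0$; evaluating the $\mathcal A$-valued quadratic forms $f\mapsto\langle\,\cdot\,f,f\rangle$ gives $\mu_{1}A\langle K^{*}f,K^{*}f\rangle A^{*}\le\sum_{j\in J}\langle f,f_{j}\rangle\langle f_{j},f\rangle$, i.e.\ the lower $*$-$K$-frame inequality with bound $\sqrt{\mu_{1}}\,A$. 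For the upper bound, take $U$ in the role of $T$ and $\widetilde B$ in the role of $S$: here $\overline{R(\widetilde{B^{*}})}=\mathcal H$ since $B$ is invertible, so again $(iii)\Rightarrow(ii)$ gives $\mu_{2}UU^{*}\le\widetilde B\widetilde B^{*}$ for some $\mu_{2}>0$, and passing to quadratic forms (using $\langle\widetilde B\widetilde B^{*}f,f\rangle=B\langle f,f\rangle B^{*}$) yields $\sum_{j\in J}\langle f,f_{j}\rangle\langle f_{j},f\rangle\le\mu_{2}^{-1}B\langle f,f\rangle B^{*}$, i.e.\ the upper inequality with bound $\mu_{2}^{-1/2}B$. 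Since $A,B$ are invertible, both $\sqrt{\mu_{1}}\,A$ and $\mu_{2}^{-1/2}B$ are strictly nonzero, so $\{f_{j}\}_{j\in J}$ is a $*$-$K$-frame with bounds $mA$, $nB$ where $m=\sqrt{\mu_{1}}$ and $n=\mu_{2}^{-1/2}$. The genuinely nontrivial point, and the expected obstacle, is exactly the passage from the norm inequalities of \eqref{normeq} back to order inequalities in $\mathcal A$ --- which fails for a general $C^{*}$-algebra; Theorem~\ref{adj} is precisely the instrument that recovers it, and it is this that forces the two standing hypotheses: the orthogonal complementation of $\overline{R(T)}$ (used for the lower estimate) and the centrality of $A,B$ (used so that $\widetilde A,\widetilde B$ are adjointable and $\overline{R(\widetilde{B^{*}})}$ is complemented).
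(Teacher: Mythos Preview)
Your proof is correct and follows essentially the same route as the paper: translate \eqref{normeq} into norm inequalities between adjoints of adjointable operators (the analysis operator $T$, the left-multiplication $Q_B$ by the central element $B$, and---in your notation---$V=K\widetilde{A^{*}}$), then invoke Theorem~\ref{adj} to upgrade these to the operator inequalities $\mu_1 VV^{*}\le UU^{*}$ and $\mu_2 UU^{*}\le \widetilde B\widetilde B^{*}$, which unwind to the $*$-$K$-frame bounds $\sqrt{\mu_1}\,A$ and $\mu_2^{-1/2}B$. You are in fact slightly more explicit than the paper in two places---you name the operator $V$ whose adjoint realizes $f\mapsto AK^{*}f$, and you verify the orthogonal-complementation hypothesis for the upper-bound application (trivial since $\widetilde{B^{*}}$ is onto)---while the paper, conversely, spells out why $T$ is bounded and adjointable from the upper inequality in \eqref{normeq}, a point you take as given.
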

\begin{proof}
$\Rightarrow$) The first part is obvious. For the reverse part, first notice that for any $f\in \mathcal{H}$,
\[\|Tf\|^2=\|\langle Tf, Tf\rangle\|=\|\sum_{j\in J}\langle f, f_j\rangle\rangle f_j, f\rangle\|\leq \|Bf\|^2\leq \|B\|^2\|f\|^2,\]
so $\|Tf\|\leq \|B\|\|f\|$. Therefore $T$ is bounded. Also it is not hard to see that $T$ is linear and adjointable and its adjoint is $T^*(\{g_j\}_{j\in J})=\sum_{j\in J}g_jf_j$, $\{g_j\}_{j\in J}\in \ell^2(\mathcal A)$.

Now for each $B\in \mathcal Z(\mathcal A)$, the mapping $Q_B:\mathcal H\to \mathcal H$ defined by $Q_Bf=Bf$ has the adjoint $Q_{B^*}$, since
\[\langle Q_Bf, g\rangle=\langle Bf, g\rangle=B\langle f, g\rangle=\langle f, g\rangle B=\langle f, B^*g\rangle=\langle f, Q_{B^*}g\rangle, \quad f, g\in \mathcal H.\]
Therefore \eqref{normeq} is equivalent to
\[\|AK^*f\|^2\leq \|Tf\|^2\leq \|Q_Bf\|^2\ \ \ \ f\in\mathcal{H}. \]
By Theorem \ref{adj} there exists $\lambda, \mu>0$ such that for every $f\in \mathcal{H}$,
\[\sqrt{\lambda}A\langle K^*f, K^*f\rangle (\sqrt{\lambda}A)^*\leq\sum_{j\in J}\langle f, f_j\rangle\langle f_j, f\rangle\leq \sqrt{\mu}B\langle f, f\rangle (\sqrt{\mu}B)^*.\]
Letting $m=\sqrt{\lambda}$ and $n=\sqrt{\mu}$, we complete the proof.
\end{proof}
By a similar argument to the proof of Lemma \ref{norm1}one may prove the following lemma by Theorem \ref{adj2}.
\begin{lemma}\label{norm2}
Suppose that $\mathcal H$ is a Hilbert modules over a C$^*$-algebra $\mathcal A$. If $\{f_j\}_{j\in J}$ is a $*$-$K$-frame with $*$-frame bounds $A$ and $B$ then
\begin{eqnarray}\label{normeq1}
\|AK^*f\|^2\leq \|\sum_{j\in J}\langle f, f_j\rangle\langle f_j, f\rangle\|\leq \|Bf\|^2, \quad f\in \mathcal H.
\end{eqnarray}
For the converse suppose that \eqref{normeq1} holds for any $A, B\in \mathcal Z(\mathcal A)$ and the operator $T:\mathcal H\to \ell^2(\mathcal A)$ defined by $Tf=\{\langle f, f_j\rangle\}_{j\in J}$ has closed range. Then $\{f_j\}_{j\in J}$ is a $*$-$K$-frame with $*$-frame bounds $mA$ and $nB$, for some positive real numbers $m, n$.
\end{lemma}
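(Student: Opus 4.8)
The plan is to mirror the proof of Lemma \ref{norm1} almost verbatim, substituting the Douglas-type theorem for closed-range operators (Theorem \ref{adj2}) for Theorem \ref{adj}. The forward implication ($\Rightarrow$) is identical: inequality \eqref{normeq1} is obtained by applying the norm to both sides of the defining inequality \eqref{AB}, using that $\|A\langle K^*f,K^*f\rangle A^*\| = \|AK^*f\|^2$ and $\|B\langle f,f\rangle B^*\|=\|Bf\|^2$, together with monotonicity of the norm on positive elements. So the work is entirely in the converse.

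For the converse, I would first record, exactly as in Lemma \ref{norm1}, that from $\|Tf\|^2 = \|\sum_{j\in J}\langle f,f_j\rangle\langle f_j,f\rangle\| \leq \|Bf\|^2 \leq \|B\|^2\|f\|^2$ the operator $T:\mathcal H\to\ell^2(\mathcal A)$ is bounded, $\mathcal A$-linear, and adjointable with $T^*(\{g_j\}_{j\in J}) = \sum_{j\in J} g_j f_j$. Next, for $B\in\mathcal Z(\mathcal A)$ I introduce $Q_B:\mathcal H\to\mathcal H$, $Q_Bf = Bf$, and check $Q_B^* = Q_{B^*}$ using centrality of $B$ — the computation $\langle Bf,g\rangle = B\langle f,g\rangle = \langle f,g\rangle B = \langle f,B^*g\rangle$ is the same one-liner. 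Similarly $Q_A^* = Q_{A^*}$, so $\|AK^*f\|^2 = \|Q_A K^* f\|^2 = \|(Q_A K^*)^* {}^* f\|^2$; more precisely one rewrites $\|AK^*f\| = \|(KQ_{A}^*)^* f\|$, i.e. the left side of \eqref{normeq1} is $\|(KQ_{A^*})^* f\|^2$ where $KQ_{A^*}\in Hom^*_{\mathcal A}(\mathcal H)$. Thus \eqref{normeq1} becomes $\|(KQ_{A^*})^*f\|^2 \leq \|Tf\|^2 = \|(T^*)^*f\|^2$, which is precisely condition $(iv)$ of Theorem \ref{adj2} (its stated extra equivalent) applied with the operator $KQ_{A^*}$ in the role of $T$ and $T^*$ in the role of $S$ — note $R(T^*)$ is closed since $R(T)$ is closed, so the hypothesis of Theorem \ref{adj2} is met. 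Theorem \ref{adj2} then yields $\lambda>0$ with $\lambda (KQ_{A^*})(KQ_{A^*})^* \leq T^* (T^*)^*$; unwinding this gives, for every $f\in\mathcal H$,
\[
\lambda\, A\langle K^*f,K^*f\rangle A^* \;\leq\; \sum_{j\in J}\langle f,f_j\rangle\langle f_j,f\rangle .
\]
For the upper bound I argue as in Lemma \ref{norm1}: applying Theorem \ref{adj2} (or directly Theorem \ref{adj}, since $\overline{R(Q_B^*)}=\mathcal H$ is trivially complemented) to the pair $(T, Q_B)$ using $\|Tf\|^2\leq\|Q_Bf\|^2$ produces $\mu>0$ with $\sum_{j\in J}\langle f,f_j\rangle\langle f_j,f\rangle \leq \mu\, B\langle f,f\rangle B^*$. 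Setting $m=\sqrt\lambda$, $n=\sqrt\mu$ and absorbing the scalars into $A$, $B$ via $\sqrt\lambda A\langle K^*f,K^*f\rangle(\sqrt\lambda A)^* = \lambda A\langle K^*f,K^*f\rangle A^*$ finishes the proof, with $mA$ and $nB$ strictly nonzero because $A,B$ are and $m,n>0$.

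The only genuine subtlety — and the place I would be most careful — is verifying that the closed-range hypothesis is exactly what Theorem \ref{adj2} needs and is inherited correctly: Theorem \ref{adj2} requires the operator playing the role of $S$ to have closed range, and here that operator is $T^*$ (for the lower bound) and $Q_B$ (for the upper bound, automatic). Closedness of $R(T)$ implies closedness of $R(T^*)$ in the Hilbert $C^*$-module setting, so this is fine, but it must be invoked explicitly. Everything else is the routine translation of norm inequalities into module-valued inequalities that the analogous Lemma already carries out, so I would keep the write-up short and simply point to the proof of Lemma \ref{norm1} for the shared steps, highlighting only the substitution of Theorem \ref{adj2} and the closed-range bookkeeping.
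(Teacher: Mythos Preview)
Your proposal is correct and is exactly the approach the paper takes: the paper's entire proof is the single sentence ``By a similar argument to the proof of Lemma \ref{norm1} one may prove the following lemma by Theorem \ref{adj2},'' and you have faithfully carried out that similar argument, substituting Theorem \ref{adj2} for Theorem \ref{adj} and tracking the closed-range hypothesis. Your extra care in noting that $R(T)$ closed implies $R(T^*)$ closed, and your hedge of falling back on Theorem \ref{adj} for the upper bound (where the endomorphism hypothesis of Theorem \ref{adj2} does not fit and one instead uses that $\overline{R(Q_B^*)}$ is complemented when $B$ is strictly nonzero), are both appropriate refinements of what the paper leaves implicit.
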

In the following theorem it is proved that $*$-atomic systems for an operator $K$ are indeed $*$-$K$-frame.
\begin{theorem}\label{main}
Let $K\in Hom^*_{\mathcal A}(\mathcal H)$ and $\{f_j\}_{j\in J}\subseteq \mathcal H$ be a $*$-Bessel sequence.
Suppose that $T:\ell^2(\mathcal A)\rightarrow \mathcal H$ is defined by $T(\{g_j\}_{j\in J})=\sum_{j\in J}g_jf_j$ and $\overline{R(T^*)}$ is orthogonally complemented. Then $\{f_j\}_{j\in J}$ is a $*$-atomic system for $K$ if and only if $\{f_j\}$ is a $*$-atomic system. Moreover, in this case if $\mathcal A$ is finite dimensional then there exists another $*$-Bessel sequence $\{h_j\}_{j\in J}$ such that for all $f\in \mathcal H$,
\[K(f)=\sum_{j\in J}\langle f, h_j\rangle f_j.\]
\end{theorem}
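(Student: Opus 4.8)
The equivalence to be proved relates the $*$-atomic system property for $K$ with the two-sided estimate \eqref{AB}, and the plan is to make it turn entirely on the single range inclusion $R(K)\subseteq R(T)$. The hypothesis that $\overline{R(T^*)}$ is orthogonally complemented is precisely what permits Theorem \ref{adj} to be applied to the pair $(K,T)$, with $K$ playing the role of the operator called $T$ in that theorem and the synthesis operator $T$ playing the role of $S$. The two identities I would keep in front of me are that the adjoint of $T$ is the analysis operator $T^*f=\{\langle f,f_j\rangle\}_{j\in J}$, so $\langle T^*f,T^*f\rangle=\sum_{j\in J}\langle f,f_j\rangle\langle f_j,f\rangle$, and that the data of a $*$-atomic system for $K$ is exactly a family $a_f\in\ell^2(\mathcal A)$ with $Ta_f=Kf$ and $\langle a_f,a_f\rangle\le C\langle f,f\rangle C^*$.

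For the direction starting from a $*$-atomic system for $K$, the identity $Kf=Ta_f$ places every $Kf$ in $R(T)$, so $R(K)\subseteq R(T)$. Applying Theorem \ref{adj} in the form $(i)\Rightarrow(iii)$ yields $\lambda>0$ with $\lambda\|K^*f\|^2\le\|T^*f\|^2=\bigl\|\sum_{j\in J}\langle f,f_j\rangle\langle f_j,f\rangle\bigr\|$. Pairing this lower estimate, with central scalar bound $\sqrt{\lambda}\,1_{\mathcal A}$, against the scalar upper estimate coming from the $*$-Bessel hypothesis puts me in the hypotheses of the converse part of Lemma \ref{norm1}, which upgrades the normed inequalities to the operator two-sided bound \eqref{AB}.

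For the reverse direction I would start from \eqref{AB}, apply the forward part of Lemma \ref{norm1} to obtain $\|AK^*f\|^2\le\|T^*f\|^2$, and use that $A$ is strictly nonzero (hence invertible) to extract the normed lower bound $\|A^{-1}\|^{-2}\|K^*f\|^2\le\|T^*f\|^2$ required by Theorem \ref{adj}$(iii)$. The implication $(iii)\Rightarrow(iv)$ then supplies an adjointable $Q:\mathcal H\to\ell^2(\mathcal A)$ with $K=TQ$. Putting $a_f:=Qf$ gives $Kf=T(Qf)=\sum_{j\in J}(Qf)_jf_j$ and $\langle a_f,a_f\rangle=\langle Qf,Qf\rangle\le\|Q\|^2\langle f,f\rangle$, so $C:=\|Q\|\,1_{\mathcal A}$, which is strictly nonzero, certifies the $*$-atomic system property. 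Both conditions thus reduce to $R(K)\subseteq R(T)$ and so are equivalent to each other.

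For the finite-dimensional refinement I would reuse the factorization $K=TQ$ and represent the coordinates of $Q$ as inner products. Each coordinate map $\pi_j\circ Q:\mathcal H\to\mathcal A$ is adjointable, the $j$-th coordinate projection $\pi_j:\ell^2(\mathcal A)\to\mathcal A$ having adjoint $a\mapsto ae_j$, so $h_j:=(\pi_j\circ Q)^*(1_{\mathcal A})$ satisfies $\langle f,h_j\rangle=(Qf)_j$ and hence $Kf=\sum_{j\in J}\langle f,h_j\rangle f_j$; the $*$-Bessel bound for $\{h_j\}$ then falls out of $\sum_{j\in J}\langle f,h_j\rangle\langle h_j,f\rangle=\langle Qf,Qf\rangle\le\|Q\|^2\langle f,f\rangle$. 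I expect this representation step to be the main obstacle: it is exactly where finite dimensionality of $\mathcal A$ enters, through the self-duality it guarantees, to ensure that the bounded $\mathcal A$-linear coordinate functionals are implemented by genuine elements $h_j\in\mathcal H$ (the Riesz-type theorem being unavailable for general Hilbert modules) and that the resulting family is a $*$-Bessel sequence in $\mathcal H$ rather than a merely formal coefficient system.
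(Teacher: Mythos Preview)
Your argument is correct, and in two places it is actually cleaner than the paper's. The main difference is in the forward direction: the paper does not observe $R(K)\subseteq R(T)$ directly but instead runs a duality computation $\|K^*f\|^2=\sup_{\|g\|=1}\|\langle Kg,f\rangle\|^2$, substitutes $Kg=\sum_j B_jf_j$, and applies Cauchy--Schwarz in $\ell^2(\mathcal A)$ to reach the norm estimate, only then invoking Theorem~\ref{adj}. Your route via $Kf=Ta_f\Rightarrow R(K)\subseteq R(T)$ is shorter; in fact you could shorten it further by using $(i)\Rightarrow(ii)$ of Theorem~\ref{adj} to get $\mu KK^*\le TT^*$ and hence the operator lower bound in one step, bypassing Lemma~\ref{norm1} entirely. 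The reverse direction in your proposal matches the paper essentially verbatim. For the last part, the paper argues that finite dimensionality of $\mathcal A$ makes $\ell^2(\mathcal A)$ self-dual and then invokes a Riesz-type representation for $Q$ globally, whereas your coordinatewise construction $h_j=(\pi_j\circ Q)^*(1_{\mathcal A})$ uses only that $\pi_j$ and $Q$ are adjointable; this is valid in any Hilbert $C^*$-module and does \emph{not} require finite dimensionality, so your worry about this step being the obstacle is misplaced---your construction in fact removes the hypothesis the paper imposes.
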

\begin{proof}
Suppose that $\{f_j\}_{j\in J}$ is a $*$-atomic system for $K$. For any $f\in \mathcal H $ we have
\[\|K^*f\|^2=\sup_{\|g\|=1}\|\langle g, K^*f\rangle\|^2=\sup_{\|g\|=1}\|\langle Kg, f\rangle\|^2.\]
By definition of $*$-atomic system, there is $b_g=\{B_j\}_{j\in J}\in \ell^2(\mathcal A)$ such that $K(g)=\sum_{j\in J}B_jf_j$ and $\langle b_g, b_g\rangle\leq C\langle g, g\rangle C^*$, for some strictly nonzero $C\in \mathcal A$. Thus
\begin{eqnarray*}
\|K^*f\|^2&=&\sup_{\|g\|=1}\|\langle\sum_{j\in J}B_jf_j, f\rangle\|^2\\
&=&\sup_{\|g\|=1}\|\sum_{j\in J}B_j\langle f_j, f\rangle\|^2\\
&\leq&\sup_{\|g\|=1}\|\sum_{j\in J}B_jB_j^*\|\|\sum_{j\in J}\langle f, f_j\rangle\langle f_j, f\rangle\|\\
&\leq&\|C\|^2\|\sum_{j\in J}\langle f, f_j\rangle\langle f_j, f\rangle\|.
\end{eqnarray*}
Note that the last inequality holds by the fact that in a C$^*$-algebra $\mathcal A$, if $A, B\in \mathcal A$ and $0\leq A\leq B$ then $\|A\|\leq \|B\|$. Hence
\[\frac{1}{\|C\|^2}\|K^*f\|^2\leq \|\sum_{j\in J}\langle f, f_j\rangle\langle f_j, f\rangle\|,\ \ \ f\in\mathcal{H}.\]
So by Theorem \ref{adj} there exists positive real number $\mu>0$ such that $\mu KK^*\leq TT^*$. Therefore
\[(\sqrt{\mu}1_{\mathcal A})\langle K^*f, K^*f\rangle(\sqrt{\mu}1_{\mathcal A})^*\leq \sum_{j\in J}\langle f, f_j\rangle\langle f_j, f\rangle. \]
For the converse, suppose that there exists strictly nonzero $A, B\in \mathcal A$ such that \eqref{AB} holds. So,
$\|A^{-1}\|^{-2}\|K^*f\|^2\leq\|T^*f\|^2$. Thus By Theorem \ref{adj} there exists an adjointable operator $Q:\mathcal H \rightarrow \ell^2(\mathcal A)$ such that $K=TQ$. Since $Qf\in \ell^2(\mathcal A)$ we set $a_f=Qf=\{A_j\}_{j\in J}$. Thus
\[K(f)=T(Q(f))=T(\{a_j\}_{j\in J})=\sum_{j\in J}A_j f_j.\]
It is enough to show that $\langle a_f, a_f\rangle\leq C\langle f, f\rangle C^*$ for some strictly nonzero $C\in \mathcal A$. Since $Q$ is adjointable then we have
\[\langle a_f, a_f\rangle=\langle Qf, Qf\rangle\leq \|Q\|^2\langle f, f\rangle=(\|Q\|1_{\mathcal A})\langle f, f\rangle(\|Q\|1_{\mathcal A})^*\]
which implies that $\langle a_f, a_f\rangle\leq C\langle f, f\rangle C^*$ with $C=\|Q\|1_{\mathcal A}$. Now let $\mathcal A$ be finite dimensional then $\|A^{-1}\|^{-2}\|K^*f\|^2\leq\|T^*f\|^2$ thus by Theorem \ref{adj} there exists an adjointable operator $Q:\mathcal H \rightarrow \ell^2(\mathcal A)$ such that $K=TQ$. In this case $\ell^2(\mathcal A)$ is self dual so there exists $\{h_j\}_{j\in J}$ for which $Q(f)=\{\langle f, h_j\rangle\}_{j\in J}$. Thus
\[K(f)=T(Q(f))=T(\{\langle f, h_j\rangle\}_{j\in J})=\sum_{j\in J}\langle f, h_j\rangle f_j.\]
and
\[\sum_{j\in J}\langle f, h_j\rangle\langle h_j, f\rangle=\langle Qf, Qf\rangle\leq\|Q\|^2\langle f, f\rangle=(\|Q\|1_{\mathcal A})\langle f, f\rangle(\|Q\|1_{\mathcal A})^*.\]
Hence $\{h_j\}_{j\in J}$ is a $*$-Bessel sequence.
\end{proof}
\begin{corollary}
Let $K\in Hom^*_{\mathcal A}(\mathcal H)$ and $\{f_j\}_{j\in J}\subseteq \mathcal H$ be a $*$-Bessel sequence with bound $B$.
Suppose that $T:\ell^2(\mathcal A)\rightarrow \mathcal H$ is defined by $T(\{g_j\}_{j\in J})=\sum_{j\in J}g_jf_j$ and $\overline{R(T^*)}$ is orthogonally complemented. Then the following are equivalent.\\
 $(i)$ $\{f_j\}_{j\in J}$ is a $*$-atomic system for $K$ such that the strictly nonzero $C$ in the definition of $*$-atomic system is in $\mathcal Z(\mathcal A)$,\\
 $(ii)$ There exists positive real numbers $m>0$ such that for any $f\in \mathcal H$
 \begin{eqnarray*}
(\frac{m}{C})\langle K^*f, K^*f\rangle (\frac{m}{C})^* \leq \sum_{j\in J}\langle f, f_j\rangle\langle f_j, f\rangle\leq B\langle f, f\rangle B^*.
\end{eqnarray*}
\begin{proof}
The proof obtains by Lemma \ref{norm1} and Theorem \ref{main}.
\end{proof}
\end{corollary}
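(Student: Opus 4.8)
The plan is to prove the equivalence by two separate implications, using Theorem~\ref{main} to move between ``$*$-atomic system for $K$'' and ``$*$-$K$-frame'', and the argument underlying Lemma~\ref{norm1} (that is, Theorem~\ref{adj}) to produce the explicit lower bound $\tfrac{m}{C}$.

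For $(i)\Rightarrow(ii)$, I would first sharpen the norm estimate that already appears in the proof of Theorem~\ref{main} so that $C$ itself, not merely $\|C\|$, survives in the bound. Since $C\in\mathcal Z(\mathcal A)$ is strictly nonzero it is invertible with $C^{-1}\in\mathcal Z(\mathcal A)$, so the multiplication operator $Q_{(C^*)^{-1}}$ is adjointable (exactly as in the proof of Lemma~\ref{norm1}). Feeding $(C^*)^{-1}g$ into the defining inequality of a $*$-atomic system and using centrality to collapse $C\langle(C^*)^{-1}g,(C^*)^{-1}g\rangle C^*=\langle g,g\rangle$, I get $\big\|\sum_{j\in J}B_jB_j^*\big\|\le 1$ for the coefficients of $K\big((C^*)^{-1}g\big)$; then the same Cauchy--Schwarz step as in Theorem~\ref{main}, followed by the supremum over $\|g\|=1$, gives
\[
\|C^{-1}K^*f\|^2\le\Big\|\sum_{j\in J}\langle f,f_j\rangle\langle f_j,f\rangle\Big\|=\|T^*f\|^2,\qquad f\in\mathcal H .
\]
This is an inequality of the form in condition $(iii)$ of Theorem~\ref{adj}, applied to the adjointable operator $V:=KQ_{(C^*)^{-1}}$ (so $V^*f=C^{-1}K^*f$) and to the synthesis operator $T$; the theorem applies because $\overline{R(T^*)}$ is orthogonally complemented, and it yields $\mu>0$ with $\mu VV^*\le TT^*$, i.e.
\[
\big(\sqrt{\mu}\,C^{-1}\big)\langle K^*f,K^*f\rangle\big(\sqrt{\mu}\,C^{-1}\big)^*\le\sum_{j\in J}\langle f,f_j\rangle\langle f_j,f\rangle ,\qquad f\in\mathcal H .
\]
Putting $m=\sqrt{\mu}$ gives the lower inequality of $(ii)$, while the upper inequality of $(ii)$ is precisely the assumption that $\{f_j\}_{j\in J}$ is $*$-Bessel with bound $B$; so $(ii)$ holds.

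For $(ii)\Rightarrow(i)$ there is little left to do: $(ii)$ says that $\{f_j\}_{j\in J}$ is a $*$-$K$-frame with strictly nonzero bounds $\tfrac{m}{C}$ and $B$, and since $\{f_j\}_{j\in J}$ is $*$-Bessel and $\overline{R(T^*)}$ is orthogonally complemented, the converse half of Theorem~\ref{main} makes it a $*$-atomic system for $K$. Reading off that proof, the strictly nonzero element produced is $\|Q\|1_{\mathcal A}$, a scalar multiple of the unit and hence in $\mathcal Z(\mathcal A)$, which is exactly what $(i)$ requires.

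I expect the only genuinely delicate point to be the sharpened estimate at the start of $(i)\Rightarrow(ii)$: one must check that the substitution $g\mapsto(C^*)^{-1}g$ is admissible and that the cancellation in $C(\cdot)C^*$ really uses the centrality of $C$. If $C$ were merely strictly nonzero one would still get $\|C^{-1}K^*f\|^2\le\big(\|C^{-1}\|\,\|C\|\big)^2\|T^*f\|^2$, which works after absorbing the constant into $m$; so $C\in\mathcal Z(\mathcal A)$ is used mostly to make the constant in $(ii)$ literally $\tfrac{m}{C}$. Everything else is a direct appeal to Theorem~\ref{main} and Theorem~\ref{adj} (equivalently, Lemma~\ref{norm1}).
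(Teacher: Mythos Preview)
Your proposal is correct and follows essentially the same route the paper indicates: the paper's proof is just the one-line reference ``by Lemma~\ref{norm1} and Theorem~\ref{main}'', and what you have written is a faithful expansion of that, using Theorem~\ref{adj} (the engine behind Lemma~\ref{norm1}) together with the forward and backward halves of Theorem~\ref{main}. Your observation that the centrality hypothesis on $C$ is precisely what is needed to upgrade the $\|C\|^{-1}$ appearing in the proof of Theorem~\ref{main} to the element $C^{-1}$, and hence to obtain the literal bound $\tfrac{m}{C}$ rather than $\tfrac{m}{\|C\|}1_{\mathcal A}$, is exactly the point of the extra assumption in $(i)$ and is the detail the paper leaves implicit.
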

 We give another characterization of $*$-atomic systems in the next theorem.
 \begin{theorem}
Let $K\in Hom^*_{\mathcal A}(\mathcal H)$ and $\{f_j\}_{j\in J}\subseteq \mathcal H$ be a $*$-Bessel sequence.
Suppose that $T:\ell^2(\mathcal A)\rightarrow \mathcal H$ is defined by $T(\{g_j\}_{j\in J})=\sum_{j\in J}g_jf_j$ and $\overline{R(T^*)}$ is orthogonally complemented. Then $\{f_j\}_{j\in J}$ is a $*$-atomic systems if and only if there exists an adjointable  operator $L:\ell^2(\mathcal A)\rightarrow \mathcal H$ for which $L(e_j)=f_j$, for every $j\in J$, where $\{e_j\}_{j\in J}$ is the standard orthonormal basis for $\ell^2(\mathcal A)$, and $R(K)\subseteq R(L)$.
 \end{theorem}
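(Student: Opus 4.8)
The statement is an "if and only if" characterizing $*$-atomic systems via the existence of an adjointable $L\colon\ell^2(\mathcal A)\to\mathcal H$ with $L(e_j)=f_j$ and $R(K)\subseteq R(L)$. The plan is to observe that the synthesis operator $T$ given in the hypothesis already satisfies $T(e_j)=\sum_{k}(\delta_{jk}1_{\mathcal A})f_k=f_j$, so $T$ is the natural candidate for $L$; hence the real content is matching the range condition $R(K)\subseteq R(T)$ with the $*$-atomic system condition. First I would prove the forward direction: assuming $\{f_j\}_{j\in J}$ is a $*$-atomic system (for some operator), Theorem \ref{main} (using that $\overline{R(T^*)}$ is orthogonally complemented) shows it is a $*$-$K$-frame, i.e. \eqref{AB} holds for some strictly nonzero $A$; then $\|A^{-1}\|^{-2}\|K^*f\|^2\le\|T^*f\|^2$ for all $f$, and Theorem \ref{adj} (equivalence of $(iii)$ and $(i)$, again invoking orthogonal complementedness of $\overline{R(T^*)}$) yields $R(K)\subseteq R(T)$. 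Setting $L=T$ finishes this direction.

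For the converse, suppose such an $L$ exists with $L(e_j)=f_j$ and $R(K)\subseteq R(L)$. The first step is to identify $L$ with $T$: since $L$ is adjointable and agrees with $T$ on the orthonormal basis $\{e_j\}$, and both are bounded $\mathcal A$-linear maps on $\ell^2(\mathcal A)$ (which is the closed span of $\{g\,e_j : g\in\mathcal A,\ j\in J\}$), we get $L=T$; in particular $R(K)\subseteq R(T)$. Then Theorem \ref{adj}, applied with $S=T$ and $\overline{R(T^*)}$ orthogonally complemented, gives an adjointable $Q\colon\mathcal H\to\ell^2(\mathcal A)$ with $K=TQ$. For each $f\in\mathcal H$ put $a_f:=Qf=\{A_j\}_{j\in J}\in\ell^2(\mathcal A)$; then $K(f)=T(Qf)=\sum_{j\in J}A_jf_j$, and the adjointability of $Q$ gives $\langle a_f,a_f\rangle=\langle Qf,Qf\rangle\le\|Q\|^2\langle f,f\rangle=(\|Q\|1_{\mathcal A})\langle f,f\rangle(\|Q\|1_{\mathcal A})^*$, so $C=\|Q\|1_{\mathcal A}$ is a strictly nonzero element witnessing the $*$-atomic system condition (for $K$, hence a fortiori $\{f_j\}$ is a $*$-atomic system). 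This essentially reuses the computation from the converse half of the proof of Theorem \ref{main}.

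The main obstacle, and the step needing the most care, is the reduction $L=T$: one must argue that an adjointable operator on $\ell^2(\mathcal A)$ is determined by its values on $\{e_j\}_{j\in J}$. This follows because $L$ and $T$ are $\mathcal A$-linear and bounded, so $L(g\,e_j)=g\,L(e_j)=g\,f_j=g\,T(e_j)=T(g\,e_j)$ for every $g\in\mathcal A$, and finite $\mathcal A$-linear combinations of the $e_j$ are dense in $\ell^2(\mathcal A)$; continuity then forces $L=T$ on all of $\ell^2(\mathcal A)$. A secondary point to keep consistent is the bookkeeping of hypotheses: the orthogonal complementedness of $\overline{R(T^*)}$ is exactly what licenses the use of Theorem \ref{adj} in both directions, and Theorem \ref{main} is what converts "$*$-atomic system" into the quantitative lower bound $\|A^{-1}\|^{-2}\|K^*f\|^2\le\|T^*f\|^2$ needed to feed Theorem \ref{adj}. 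Everything else is routine manipulation already carried out in the proof of Theorem \ref{main}.
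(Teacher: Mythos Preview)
Your proposal is correct and follows essentially the same approach as the paper: in the forward direction you set $L=T$ and use the $*$-$K$-frame lower bound (via Theorem~\ref{main}) together with Theorem~\ref{adj} to get $R(K)\subseteq R(T)$, while in the converse you identify $L$ with $T$ and factor $K=TQ$ via Theorem~\ref{adj}, then bound $\langle Qf,Qf\rangle$ exactly as in the paper. The only cosmetic difference is that the paper, instead of arguing $L=T$ by density, computes $L^*$ and $L$ directly from $L(e_j)=f_j$ and adjointability to arrive at $L(g)=\sum_j g_j f_j=T(g)$; your density argument is a cleaner and equally valid way to reach the same identification.
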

\begin{proof}
Suppose that $\{f_j\}_{j\in J}$ is a $*$-atomic system for $K$. Consider the mapping $T_1:\mathcal H \rightarrow \ell^2(\mathcal A)$ defined by $T_1(f)=\{\langle f, f_j\rangle\}_{j\in J}$.
The operator $T_1$ is well-defined, adjointable and $T_1^*e_j=f_j$, for $j\in J$, since for every $h\in \mathcal H $ we have
\[ \langle f_j, h\rangle=\langle e_j, T_1h\rangle=\langle T_1^*e_j, h\rangle.\]
Let $L=T_1^*$ then by Theorem \ref{main1} $R(K)\subseteq R(L)$.\\
For the converse if $R(K)\subseteq R(L)$ then by Theorem \ref{adj} there exists an adjointable operator $Q$ such that $K=LQ$. Thus for any $f\in \mathcal H$
\[K(f)=LQ(f)=L(\{C_j\}_{j\in J}),\]
for some $\{C_j\}_{j\in J}\in \ell^2(\mathcal A)$. Put $c_f=Q(f)=\{C_j\}_{j\in J}$. For every $g=\{g_j\}_{j\in J}\in \ell^2(\mathcal A)$ we have
\[\langle L^*f, g\rangle=\langle L^*f, \sum_{j\in J}\langle g_j, e_j\rangle e_j\rangle
=\sum_{j\in J}\langle e_j, g\rangle \langle f, Le_j\rangle=\langle \sum_{j\in J}\langle f, f_j\rangle e_j, g\rangle.\]
So $L^*(f)=\sum_{j\in J}\langle f, f_j\rangle e_j$. Hence
\[ \langle L^*f, g\rangle=\langle \sum_{j\in J}\langle f, f_j\rangle e_j, g\rangle=\langle \{\langle f, f_j\rangle\}, \{g_j\}\rangle
=\sum_{j\in J}\langle f, f_j\rangle g_j^*=\langle f, \sum_{j\in J}g_jf_j\rangle.\]
So $L(g)=\sum_{j\in J}g_jf_j$. Therefore $L(\{C_j\}_{j\in J})=\sum_{j\in J}C_jf_j$ and $\langle c_f, c_f\rangle=\langle Qf, Qf\rangle\leq \|Q\|^2\langle f, f\rangle=(\|Q\|1_{\mathcal A})\langle f, f\rangle(\|Q\|1_{\mathcal A})^*$.
\end{proof}
\begin{corollary}\label{main1}
Let $K\in Hom^*_{\mathcal A}(\mathcal H)$. Suppose that $\{f_j\}_{j\in J}$ is a $*$-Bessel sequence and $\overline{R(U^*)}$ is an orthogonally complemented. Then $\{f_j\}_{j\in J}$ is a $*$-atomic systems for $K$ if and only if $\{f_j\}_{j\in J}$ is a $*$-Bessel sequence and $R(K)\subseteq R(U)$.
\end{corollary}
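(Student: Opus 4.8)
The plan is to read this off from the theorem immediately preceding it by taking the operator there called $L$ to be the synthesis operator $U$ of the $*$-Bessel sequence $\{f_j\}_{j\in J}$. First I would observe, exactly as in the proof of Lemma \ref{norm1}, that for a $*$-Bessel sequence the analysis map $U^*:\mathcal H\to\ell^2(\mathcal A)$, $U^*f=\{\langle f,f_j\rangle\}_{j\in J}$, is well defined (the Bessel condition forces $\{\langle f,f_j\rangle\}_{j\in J}\in\ell^2(\mathcal A)$) and adjointable, with adjoint the synthesis operator $U(\{g_j\}_{j\in J})=\sum_{j\in J}g_jf_j$. In particular $U\in Hom^*_{\mathcal A}(\ell^2(\mathcal A),\mathcal H)$ and $U(e_j)=1_{\mathcal A}f_j=f_j$ for every $j\in J$, where $\{e_j\}_{j\in J}$ is the standard orthonormal basis of $\ell^2(\mathcal A)$. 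Hence $U$ is an admissible choice for $L$, and the hypothesis that $\overline{R(U^*)}$ be orthogonally complemented is precisely what the preceding theorem requires (with $T=U$).

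For the implication ``$R(K)\subseteq R(U)$ implies $\{f_j\}_{j\in J}$ is a $*$-atomic system for $K$'' I would simply apply that theorem with $L=U$: the operator $U$ is adjointable, $U(e_j)=f_j$, and $R(K)\subseteq R(U)$, so the theorem yields the conclusion. Conversely, if $\{f_j\}_{j\in J}$ is a $*$-atomic system for $K$, the theorem produces some adjointable $L:\ell^2(\mathcal A)\to\mathcal H$ with $L(e_j)=f_j$ for all $j$ and $R(K)\subseteq R(L)$, and it remains to see that $R(L)=R(U)$. For this I would argue $L=U$: both operators are bounded and agree on every $e_j$, hence on every finite $\mathcal A$-linear combination $\sum_{j\in F}g_je_j$; since any $\{g_j\}_{j\in J}\in\ell^2(\mathcal A)$ is the norm limit of its finite truncations, such combinations are dense in $\ell^2(\mathcal A)$, and continuity gives $L=U$, whence $R(K)\subseteq R(L)=R(U)$.

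The one point needing a little care --- and the only real (mild) obstacle --- is this density/uniqueness step, namely that an adjointable operator on $\ell^2(\mathcal A)$ is determined by its values on $\{e_j\}_{j\in J}$. I would also remark that, because $\{f_j\}_{j\in J}$ is assumed to be a $*$-Bessel sequence in the hypothesis, the clause ``$\{f_j\}_{j\in J}$ is a $*$-Bessel sequence'' appearing on the right-hand side of the stated equivalence is redundant, so the substance of the corollary is the equivalence between $\{f_j\}_{j\in J}$ being a $*$-atomic system for $K$ and the range inclusion $R(K)\subseteq R(U)$.
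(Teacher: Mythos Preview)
Your proposal is correct and matches the paper's intent: the corollary is stated without proof, meant to follow immediately from the preceding theorem by taking $L=U$, exactly as you do. One minor simplification is that the density argument is not strictly needed for the forward direction: if you trace the paper's proof of that theorem, the operator $L$ it actually produces is $T_1^*$, which \emph{is} the synthesis operator $U$, so $R(K)\subseteq R(U)$ comes out directly rather than via an auxiliary $L$ that one then identifies with $U$.
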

 \section{Some more Properties of $*$-$K$-frames}
 In this section first using a $*$-$K$-frames and some elements of $Hom^*_{\mathcal A}(\mathcal H )$, we are going to construct new $*$-$K$-frames. Next the tensor product of two $*$-$K$-frames and $*$-L-frames are considered.
\begin{proposition}\label{propos}
Let $K, L\in Hom^*_{\mathcal A}(\mathcal H)$ and $\{f_j\}_{j\in J}$ be a $*$-$K$-frame with the $*$-frame bounds $A, B$, then\\
$(i)$ If  $T : \mathcal H \rightarrow \mathcal H $ is an co-isometry such that $KT=TK$ then $\{Tf_j\}_{j\in J}$ is a  $*$-$K$-frame with the same $*$-frame bounds.\\
$(ii)$ $\{Lf_j\}_{j\in J}$ is a $*$-LK-frame with the $*$-frame bounds $A$ and $B||L||$, respectively.\\
$(iii)$ For any $n\in \mathbb{N}$, $\{L^nf_j\}_{j\in J}$ is a $*$-L$^n$K-frame.\\
$(iv)$ If $R(L)\subseteq R(K)$ such that $K$ has closed range then $\{f_j\}_{j\in J}$ is also a $*$-$L$-frame.
\end{proposition}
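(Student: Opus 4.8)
The plan is to treat the four parts essentially independently, in each case reducing to the defining inequality \eqref{AB} for the given $*$-$K$-frame $\{f_j\}_{j\in J}$ together with the operator inequality \eqref{kk} and the Douglas-type result Theorem \ref{adj2}; the recurring algebraic fact used throughout is that $C\le D$ in $\mathcal A$ implies $ECE^*\le EDE^*$ for any $E\in\mathcal A$, and that a nonnegative scalar commutes past every element of $\mathcal A$.

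For $(i)$ I would write $\langle f,Tf_j\rangle=\langle T^*f,f_j\rangle$, so that $\sum_{j\in J}\langle f,Tf_j\rangle\langle Tf_j,f\rangle=\sum_{j\in J}\langle T^*f,f_j\rangle\langle f_j,T^*f\rangle$, and apply \eqref{AB} with $f$ replaced by $T^*f$. The co-isometry hypothesis $TT^*=1_{\mathcal H}$ gives $\langle T^*f,T^*f\rangle=\langle f,f\rangle$, which turns the upper bound into $B\langle f,f\rangle B^*$; and since $KT=TK$ forces $K^*T^*=(TK)^*=(KT)^*=T^*K^*$, one gets $\langle K^*T^*f,K^*T^*f\rangle=\langle T^*K^*f,T^*K^*f\rangle=\langle TT^*K^*f,K^*f\rangle=\langle K^*f,K^*f\rangle$, so the lower bound is $A\langle K^*f,K^*f\rangle A^*$, and both bounds are unchanged. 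Part $(ii)$ uses the same substitution $\langle f,Lf_j\rangle=\langle L^*f,f_j\rangle$ and \eqref{AB} at $L^*f$: the lower term becomes $A\langle K^*L^*f,K^*L^*f\rangle A^*=A\langle (LK)^*f,(LK)^*f\rangle A^*$, and the upper term is $B\langle L^*f,L^*f\rangle B^*\le B(\|L\|^2\langle f,f\rangle)B^*=(\|L\|B)\langle f,f\rangle(\|L\|B)^*$ by \eqref{kk} applied with $K$ replaced by $L$; note $\|L\|B$ is still strictly nonzero provided $L\neq 0$. Part $(iii)$ then follows by induction on $n$: the case $n=1$ is $(ii)$, and if $\{L^nf_j\}_{j\in J}$ is a $*$-$L^nK$-frame then applying $(ii)$ with $K$ replaced by the operator $L^nK$ shows $\{L(L^nf_j)\}_{j\in J}=\{L^{n+1}f_j\}_{j\in J}$ is a $*$-$(L\cdot L^nK)$-frame, i.e.\ a $*$-$L^{n+1}K$-frame.

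For $(iv)$ the upper bound stays $B$, so only the lower bound needs attention. Since $R(L)\subseteq R(K)$ and $R(K)$ is closed, Theorem \ref{adj2} applied with $T=L$ and $S=K$ supplies $\lambda>0$ with $\lambda LL^*\le KK^*$; evaluating against $f$ gives $\lambda\langle L^*f,L^*f\rangle=\lambda\langle LL^*f,f\rangle\le\langle KK^*f,f\rangle=\langle K^*f,K^*f\rangle$, and conjugating by $A$ and chaining with \eqref{AB} yields $(\sqrt{\lambda}A)\langle L^*f,L^*f\rangle(\sqrt{\lambda}A)^*\le A\langle K^*f,K^*f\rangle A^*\le\sum_{j\in J}\langle f,f_j\rangle\langle f_j,f\rangle$; since $\lambda>0$ and $A$ is strictly nonzero, so is $\sqrt{\lambda}A$, and hence $\{f_j\}_{j\in J}$ is a $*$-$L$-frame with bounds $\sqrt{\lambda}A$ and $B$. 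The only step requiring real care is this last one — one must check the hypotheses of Theorem \ref{adj2} ($L,K$ adjointable on $\mathcal H$ and $R(K)$ closed) and read off the operator-inequality form of its conclusion — together with the bookkeeping in $(ii)$ and $(iv)$ confirming that the rescaled bounds remain strictly nonzero; everything else is routine manipulation of the defining inequalities.
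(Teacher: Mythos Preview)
Your proof is correct and follows essentially the same route as the paper's: in each part you substitute $T^*f$ or $L^*f$ into the defining inequality \eqref{AB}, use the co-isometry identity $TT^*=1_{\mathcal H}$ and the commutation $K^*T^*=T^*K^*$ for $(i)$, the operator bound \eqref{kk} for $(ii)$, iterate for $(iii)$, and invoke Theorem~\ref{adj2} for $(iv)$. The only differences are cosmetic---you spell out the induction for $(iii)$ and the strict-nonzero check for the new bounds, while the paper leaves these implicit.
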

\begin{proof}
Since $\{f_j\}_{j\in J}$ is a $*$-$K$-frame with the $*$-frame bounds $A, B$, so we have
\[
A\langle K^*f, K^*f\rangle A^*\leq\sum_{j\in J}\langle f, f_j\rangle\langle f_j, f\rangle\leq B\langle f, f\rangle B^*, ~~f\in \mathcal H .\]
Hence for any $f\in \mathcal H $,
\begin{eqnarray*}
 \sum_{j\in J}\langle f, Tf_j\rangle\langle Tf_j, f\rangle\leq B\langle T^*f, T^*f\rangle B^*=B\langle f, f\rangle B^*.
\end{eqnarray*}
On the other hand for all $f\in \mathcal H $
\begin{eqnarray*}
\sum_{j\in J}\langle f, Tf_j\rangle\langle Tf_j, f\rangle &\geq& A\langle K^*T^*f, K^*T^*f\rangle A^*\\
&=& A\langle T^*K^*f, T^*K^*f\rangle A^*\\
&=& A\langle K^*f, K^*f\rangle A^*,
\end{eqnarray*}
which proves $(i)$.\\
For proving $(ii)$, one may see that for any $f\in \mathcal H $,
\begin{eqnarray*}
A\langle (LK)^*f, (LK)^*f\rangle A^*=A\langle K^*L^*f, K^*L^*f\rangle A^*&\leq&\sum_{j\in J}\langle f, Lf_j\rangle\langle Lf_j, f\rangle\\
&\leq& B\langle L^*f, L^*f\rangle B^*\\
&\leq& (B\|L\|)\langle f, f\rangle (B\|L\|)^*.
\end{eqnarray*}
$(iii)$ is trivial by applying $(ii)$.\\
For proving $(iv)$, if $A$ and $B$ are the $*$-$K$-frame bounds of $\{f_j\}_{j\in J}$ then  by the fact that $R(L)\subseteq R(K)$ with closed range $K$ and Theorem \ref{adj2}, there exists positive real number $\lambda>0$ such that for all $f\in \mathcal H$, $\lambda LL^*f\leq KK^*f$. Thus for any $f\in H$,
\[(\sqrt{\lambda}A)\langle L^*f, L^*f\rangle(\sqrt{\lambda}A)^*\leq A\langle K^*f, K^*f\rangle A^*\leq\sum_{j\in J}\langle f, f_j\rangle \langle f_j, f\rangle\leq B\langle f, f\rangle B^* .\]
\end{proof}
\begin{proposition}
Let $K\in Hom^*_{\mathcal A}(\mathcal H)$ and  $\{f_j\}_{j\in J}$ be a $*$-frame with the $*$-frame bounds $A, B$,
then $\{Kf_j\}_{j\in J}$ is a $*$-$K$-frame  with the $*$-frame bounds $A, B||K||$. The $*$-frame operator of $\{Kf_j\}_{j\in J}$ is $S^{'}=KSK^*$, where $S$ is the $*$-frame operator of $\{f_j\}_{j\in J}$.
\end{proposition}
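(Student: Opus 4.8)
The plan is to deduce both $*$-$K$-frame inequalities for $\{Kf_j\}_{j\in J}$ directly from the $*$-frame inequalities for $\{f_j\}_{j\in J}$ by substituting $K^*f$ for the generic vector, and then to identify the $*$-frame operator by a short computation that pulls $K$ through the defining series.

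First I would fix $f\in\mathcal H$ and rewrite the relevant sum using adjointability of $K$:
\[\sum_{j\in J}\langle f, Kf_j\rangle\langle Kf_j, f\rangle=\sum_{j\in J}\langle K^*f, f_j\rangle\langle f_j, K^*f\rangle.\]
Applying the $*$-frame inequalities for $\{f_j\}_{j\in J}$ to the vector $K^*f$ gives
\[A\langle K^*f, K^*f\rangle A^*\leq\sum_{j\in J}\langle f, Kf_j\rangle\langle Kf_j, f\rangle\leq B\langle K^*f, K^*f\rangle B^*.\]
The left-hand inequality is already the lower $*$-$K$-frame estimate with bound $A$. For the right-hand one I would invoke \eqref{kk}, namely $\langle K^*f, K^*f\rangle\leq\|K\|^2\langle f, f\rangle$, together with the elementary fact already used in the excerpt that $C\leq D$ in $\mathcal A$ implies $BCB^*\leq BDB^*$; this yields
\[B\langle K^*f, K^*f\rangle B^*\leq B\bigl(\|K\|^2\langle f, f\rangle\bigr)B^*=(B\|K\|)\langle f, f\rangle(B\|K\|)^*,\]
so that $\{Kf_j\}_{j\in J}$ satisfies the upper estimate with $B\|K\|$. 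Since $A$ is strictly nonzero and $\|K\|>0$ is a positive scalar, $B\|K\|$ is again strictly nonzero (its spectrum is $\|K\|\,\sigma(B)$, which omits $0$), so $\{Kf_j\}_{j\in J}$ is a genuine $*$-$K$-frame with bounds $A$ and $B\|K\|$; in particular the upper estimate shows it is a $*$-Bessel sequence, hence its $*$-frame operator $S'$ is well defined.

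Finally, to compute $S'$ I would use that $K$ is $\mathcal A$-linear and, being adjointable, bounded and therefore norm-continuous; since the series $S(K^*f)=\sum_{j\in J}\langle K^*f, f_j\rangle f_j$ converges in norm (as $\{f_j\}_{j\in J}$ is a $*$-frame), one may interchange $K$ with the summation:
\[S'(f)=\sum_{j\in J}\langle f, Kf_j\rangle Kf_j=\sum_{j\in J}K\bigl(\langle K^*f, f_j\rangle f_j\bigr)=K\Bigl(\sum_{j\in J}\langle K^*f, f_j\rangle f_j\Bigr)=KS(K^*f)=KSK^*(f).\]
There is no substantial obstacle here; the only point requiring a word of care is this last interchange of the bounded operator $K$ with the infinite sum, which is legitimate precisely because the $*$-frame property of $\{f_j\}_{j\in J}$ guarantees norm-convergence of the series defining $S$.
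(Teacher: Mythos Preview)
Your proof is correct and follows essentially the same route as the paper: substitute $K^*f$ into the $*$-frame inequalities for $\{f_j\}$, bound $\langle K^*f,K^*f\rangle$ by $\|K\|^2\langle f,f\rangle$ for the upper estimate, and pull $K$ through the defining series of $S$ to obtain $S'=KSK^*$. If anything, you are more explicit than the paper in checking that $B\|K\|$ remains strictly nonzero and in justifying the interchange of $K$ with the norm-convergent sum.
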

\begin{proof}
Since $\{f_j\}_{j\in J}$ is a $*$-frame so for any $f\in \mathcal H $,
\[
A\langle K^*f, K^*f\rangle A^*\leq\sum_{j\in J}\langle f, Kf_j\rangle\langle Kf_j, f\rangle\leq B\langle K^*f, K^*f\rangle B^*\leq (B\|K\|)\langle f, f\rangle(B\|K\|)^*.
\]
But by definition of $S$, $SK^*f=\sum_{j\in J}\langle f, Kf_j\rangle f_j$. Thus
\begin{eqnarray}\label{frame op}
KSK^*f=K\sum_{j\in J}\langle f, Kf_j\rangle f_j=\sum_{j\in J}\langle f, Kf_j\rangle Kf_j.
\end{eqnarray}
Hence $S^{'}=KSK^*$.
\end{proof}
\begin{proposition}
Suppose that $K\in Hom^*_{\mathcal A}(\mathcal H)$ and  $\{f_j\}_{j\in J}$ is a $*$-frame, then $\{KS^{-1}f_j\}$ is a $*$-$K$-frame, when $S$ is the $*$-frame operator of $\{f_j\}_{j\in J}$.
\end{proposition}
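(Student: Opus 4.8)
The plan is to reduce the statement to the preceding proposition by showing that the canonical dual sequence $\{S^{-1}f_j\}_{j\in J}$ is itself a $*$-frame for $\mathcal H$. Recall first that, since $\{f_j\}_{j\in J}$ is a $*$-frame with strictly nonzero bounds, its frame operator $S$ is a positive invertible element of $Hom^*_{\mathcal A}(\mathcal H)$; hence $S^{-1}\in Hom^*_{\mathcal A}(\mathcal H)$ is self-adjoint and $S^{-1}f_j\in\mathcal H$ for every $j\in J$, so $\{KS^{-1}f_j\}_{j\in J}$ is a well-defined sequence in $\mathcal H$.

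First I would fix $f\in\mathcal H$ and use self-adjointness of $S^{-1}$ to write $\langle f, S^{-1}f_j\rangle=\langle S^{-1}f, f_j\rangle$, so that
\[
\sum_{j\in J}\langle f, S^{-1}f_j\rangle\langle S^{-1}f_j, f\rangle=\sum_{j\in J}\langle S^{-1}f, f_j\rangle\langle f_j, S^{-1}f\rangle ,
\]
the right-hand side converging in norm because $\{f_j\}_{j\in J}$ is $*$-Bessel. Applying the $*$-frame inequality of $\{f_j\}_{j\in J}$ to the element $S^{-1}f$ bounds this quantity between $A\langle S^{-1}f, S^{-1}f\rangle A^*$ and $B\langle S^{-1}f, S^{-1}f\rangle B^*$. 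Next I would sandwich $\langle S^{-1}f, S^{-1}f\rangle$ between multiples of $\langle f, f\rangle$ using the module inequality $\langle Tg, Tg\rangle\leq\|T\|^2\langle g, g\rangle$ for adjointable $T$: with $T=S^{-1}$ this gives $\langle S^{-1}f, S^{-1}f\rangle\leq\|S^{-1}\|^2\langle f, f\rangle$ directly, while with $T=S$ and $g=S^{-1}f$ it gives $\langle f, f\rangle\leq\|S\|^2\langle S^{-1}f, S^{-1}f\rangle$, i.e. $\|S\|^{-2}\langle f, f\rangle\leq\langle S^{-1}f, S^{-1}f\rangle$. Combining these with the rule $X\leq Y\Rightarrow CXC^*\leq CYC^*$ yields
\[
(\|S\|^{-1}A)\langle f, f\rangle(\|S\|^{-1}A)^*\leq\sum_{j\in J}\langle f, S^{-1}f_j\rangle\langle S^{-1}f_j, f\rangle\leq(\|S^{-1}\|B)\langle f, f\rangle(\|S^{-1}\|B)^* .
\]
Since $A,B$ are strictly nonzero and $\|S\|^{-1},\|S^{-1}\|$ are nonzero positive scalars, $0\notin\sigma(\|S\|^{-1}A)=\|S\|^{-1}\sigma(A)$ and likewise $0\notin\sigma(\|S^{-1}\|B)$, so both elements are strictly nonzero; hence $\{S^{-1}f_j\}_{j\in J}$ is a $*$-frame for $\mathcal H$.

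Finally I would apply the preceding proposition to the operator $K$ and the $*$-frame $\{S^{-1}f_j\}_{j\in J}$, concluding that $\{KS^{-1}f_j\}_{j\in J}$ is a $*$-$K$-frame (with bounds $\|S\|^{-1}A$ and $\|S^{-1}\|\,\|K\|\,B$). The whole argument is essentially routine once the two-step structure is in place; the only point needing a little care is the lower estimate for the dual sequence — namely establishing $\langle f, f\rangle\leq\|S\|^2\langle S^{-1}f, S^{-1}f\rangle$ and verifying that the resulting lower bound $\|S\|^{-1}A$ is still strictly nonzero — so that the previous proposition genuinely applies.
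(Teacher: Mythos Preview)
Your proof is correct but follows a genuinely different route from the paper. The paper argues via Theorem~\ref{main} (the equivalence between $*$-atomic systems for $K$ and $*$-$K$-frames): starting from the reconstruction formula $f=\sum_{j}\langle f,f_j\rangle S^{-1}f_j$ it writes $Kf=\sum_{j}\langle f,f_j\rangle\, KS^{-1}f_j$, notes that the coefficient sequence $a_f=\{\langle f,f_j\rangle\}$ satisfies $\langle a_f,a_f\rangle\le B\langle f,f\rangle B^*$ by the Bessel bound of $\{f_j\}$, and checks separately that $\{KS^{-1}f_j\}$ is $*$-Bessel; this exhibits $\{KS^{-1}f_j\}$ as a $*$-atomic system for $K$, hence a $*$-$K$-frame. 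Your two-step route instead first verifies that the canonical dual $\{S^{-1}f_j\}$ is itself a $*$-frame and then invokes the preceding proposition. Your argument has the advantage of producing explicit bounds and of not depending on the orthogonal-complementation hypothesis on $\overline{R(T^*)}$ that Theorem~\ref{main} requires; the paper's route, by contrast, highlights the atomic-system interpretation of $\{KS^{-1}f_j\}$ directly through the reconstruction formula.
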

\begin{proof}
By Theorem \ref{main}, it is enough to show that $\{f_j\}_{j\in J} $ is a $*$-atomic system for $\mathcal H $. We have that $f= \sum_{j\in J}\langle f, f_j\rangle S^{-1}f_j$, for all $f\in \mathcal H $. Thus
\begin{eqnarray*}
Kf=\sum_{j\in J}\langle f, f_j\rangle KS^{-1}f_j, ~~ f\in \mathcal H .
\end{eqnarray*}
Since $\{f_j\}_{j\in J}$ is a $*$-frame so $\{\langle f, f_j\rangle\}_{j\in J}$ is a $*$-Bessel sequence and trivially $\{KS^{-1}f_j\}_{j\in J}$ is a $*$-Bessel sequence, since for $f\in \mathcal H $,
\begin{eqnarray*}
\sum_{j\in J}\langle f, KS^{-1}f_j\rangle \langle KS^{-1}f_j, f\rangle&\leq& B\langle (KS^{-1})^*f, (KS^{-1})^*f\rangle B^*\\
 &\leq& (B\|S^{-1}\|\|K\|)\langle f, f\rangle (B\|S^{-1}\|\|K\|)^*.
\end{eqnarray*}
\end{proof}
\begin{proposition}
If $K, L\in Hom^*_{\mathcal A}(\mathcal H)$ and $\overline{R(L^*)}$ is orthogonally complemented, $R(K)\subseteq R(L)$ and  $\{f_j\}_{j\in J}$ is a $*$-frame with $*$-frame bounds $A, B$, then $\{Lf_j\}_{j\in J}$ is a $*$-$K$-frame with the $*$-frame operator  $S^{'}=L^*SL$.
\end{proposition}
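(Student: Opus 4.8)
The plan is to verify the two defining inequalities of a $*$-$K$-frame for the system $\{Lf_j\}_{j\in J}$ separately, and then to identify its $*$-frame operator by a direct computation.

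First I would settle the upper bound. For $f\in\mathcal H$ one has $\langle f,Lf_j\rangle=\langle L^*f,f_j\rangle$, so applying the upper $*$-frame inequality of $\{f_j\}_{j\in J}$ to the vector $L^*f$, and then combining the estimate $\langle L^*f,L^*f\rangle\leq\|L\|^2\langle f,f\rangle$ with the implication $X\leq Y\Rightarrow CXC^*\leq CYC^*$, gives
\[
\sum_{j\in J}\langle f,Lf_j\rangle\langle Lf_j,f\rangle\;\leq\;B\langle L^*f,L^*f\rangle B^*\;\leq\;(B\|L\|)\langle f,f\rangle(B\|L\|)^*.
\]
In particular the middle series converges in norm, and $B\|L\|$ (strictly nonzero since $B$ is) serves as an upper $*$-frame bound.

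The main point is the lower bound, which is where the hypotheses $R(K)\subseteq R(L)$ and $\overline{R(L^*)}$ orthogonally complemented enter. Applying Theorem \ref{adj} with $T=K$ and $S=L$ (both viewed as operators on $\mathcal H$), the inclusion $R(K)\subseteq R(L)$ yields a real number $\mu>0$ with $\mu KK^*\leq LL^*$; since $\langle K^*f,K^*f\rangle=\langle KK^*f,f\rangle$ and similarly for $L$, this means $\mu\langle K^*f,K^*f\rangle\leq\langle L^*f,L^*f\rangle$ for all $f\in\mathcal H$. Conjugating this inequality by $A$ and then invoking the lower $*$-frame inequality of $\{f_j\}_{j\in J}$ at $L^*f$, I obtain
\[
(\sqrt{\mu}\,A)\langle K^*f,K^*f\rangle(\sqrt{\mu}\,A)^*\;\leq\;A\langle L^*f,L^*f\rangle A^*\;\leq\;\sum_{j\in J}\langle f,Lf_j\rangle\langle Lf_j,f\rangle ,
\]
for every $f\in\mathcal H$. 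As $\sqrt{\mu}\,A$ is strictly nonzero, this shows $\{Lf_j\}_{j\in J}$ is a $*$-$K$-frame with bounds $\sqrt{\mu}\,A$ and $B\|L\|$.

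Finally, the $*$-frame operator follows at once from the $\mathcal A$-linearity of $L$: for $f\in\mathcal H$,
\[
S'f=\sum_{j\in J}\langle f,Lf_j\rangle Lf_j=L\Big(\sum_{j\in J}\langle L^*f,f_j\rangle f_j\Big)=LS(L^*f),
\]
the series converging by the Bessel estimate above, so $S'=LSL^*$ with $S$ the $*$-frame operator of $\{f_j\}_{j\in J}$. I expect the lower bound to be the only genuinely delicate step, but it collapses immediately to the module Douglas theorem (Theorem \ref{adj}) once the range inclusion $R(K)\subseteq R(L)$ is recorded; everything else is routine order arithmetic in $\mathcal A$ together with the module action.
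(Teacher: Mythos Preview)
Your argument is correct and follows essentially the same route as the paper: both prove the upper bound via $\langle L^*f,L^*f\rangle\leq\|L\|^2\langle f,f\rangle$, obtain the lower bound from $R(K)\subseteq R(L)$ through the module Douglas theorem, and declare the frame-operator identity a routine check. One remark: your direct computation yields $S'=LSL^*$, whereas the statement reads $S'=L^*SL$; your formula is the correct one (the paper's own preceding proposition derives the frame operator of $\{Kf_j\}$ as $KSK^*$ by the identical calculation), so the stated $S'=L^*SL$ appears to be a typo.
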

\begin{proof}
Since $R(K)\subseteq R(L)$ so by Theorem \ref{adj} there exists a positive real number $\lambda>0$ such that $\lambda KK^*\leq LL^*$ therefore $\lambda\langle K^*f, K^*f\rangle\leq \langle L^*f, L^*f\rangle$ hence $(A\sqrt{\lambda})\langle K^*f, K^*f\rangle (A\sqrt{\lambda})^*\leq A\langle L^*f, L^*f\rangle A^*$, thus by the facts that $\{f_j\}_{j\in J}$ is a $*$-frame we have
\begin{eqnarray*}
(A\sqrt{\lambda})\langle K^*f, K^*f\rangle (A\sqrt{\lambda})^*\leq A\langle L^*f, L^*f\rangle A^*&\leq&\sum_{j\in J}\langle f, Lf_j\rangle\langle Lf_j, f\rangle\\
&\leq& B\langle L^*f, L^*f\rangle B^*\\
&\leq& (B\|L\|)\langle f, f\rangle (B\|L\|)^*.
\end{eqnarray*}
So $\{Lf_j\}_{j\in J}$ is a $*$-$K$-frame with $*$-frame bounds $A\sqrt{\lambda}$ and $B\|L\|$. The proof of  $S^{'}=L^*SL$ is obvious.
\end{proof}
In the rest of this section we are going to study the tensor product of two $*$-$K$-frame and $*$-$L$-frame.
\begin{theorem}\label{tensor}
 Let $\{f_j\}_{j\in J}\subseteq \mathcal  H $ and $\{h_j\}_{j\in J}\subseteq \mathcal K $ be two $*$-$K$-frame and $*$-$L$-frame for $\mathcal H$ and $\mathcal K$ with $*$-frame operators $S_f$ and $S_h$ and $*$-frame bounds $A, B$ and $C, D$, respectively. Then $\{f_j\otimes h_j\}_{j\in J}$ is a $*$-$K\otimes L$-frame for Hilbert $\mathcal A\otimes\mathcal B$-module $\mathcal H\otimes\mathcal K $ with $*$-frame operator $S_f\otimes S_h$ and lower $*$-frame bound $A\otimes C$ and upper $*$-frame bound $B\otimes D$.
\end{theorem}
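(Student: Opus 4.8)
The plan is to verify the two-sided $*$-frame inequality defining a $*$-$K\otimes L$-frame directly on elementary tensors first, then extend to the completion $\mathcal H\otimes\mathcal K$, and finally identify the frame operator. Recall from the preliminaries that if $A_1\le B_1$ in $\mathcal A$ and $C$ is a positive element of $\mathcal B$ then $A_1\otimes C\le B_1\otimes C$, and that $(K\otimes L)^*=K^*\otimes L^*$; these are the algebraic facts that make the argument work.

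First I would start from the defining inequalities of the two frames: for all $f\in\mathcal H$ and $g\in\mathcal K$,
\begin{eqnarray*}
A\langle K^*f,K^*f\rangle A^*\le \sum_{j\in J}\langle f,f_j\rangle\langle f_j,f\rangle\le B\langle f,f\rangle B^*,\\
C\langle L^*g,L^*g\rangle C^*\le \sum_{j\in J}\langle g,h_j\rangle\langle h_j,g\rangle\le D\langle g,g\rangle D^*.
\end{eqnarray*}
Tensoring the first chain on the right by the positive element $\sum_{j}\langle g,h_j\rangle\langle h_j,g\rangle$ and the second on the left by $A\langle K^*f,K^*f\rangle A^*$ (using the order-preservation of $\otimes$ by positive elements in each leg, applied twice), one obtains
\[
(A\otimes C)\big(\langle K^*f,K^*f\rangle\otimes\langle L^*g,L^*g\rangle\big)(A\otimes C)^*
\le \Big(\sum_{j}\langle f,f_j\rangle\langle f_j,f\rangle\Big)\otimes\Big(\sum_{j}\langle g,h_j\rangle\langle h_j,g\rangle\Big)
\le (B\otimes D)\big(\langle f,f\rangle\otimes\langle g,g\rangle\big)(B\otimes D)^*.
\]
Now I would rewrite the middle term: since the two index sets coincide and the inner product on the tensor module satisfies $\langle f_1\otimes g_1,f_2\otimes g_2\rangle=\langle f_1,f_2\rangle\otimes\langle g_1,g_2\rangle$, a routine computation gives
\[
\sum_{j\in J}\langle f\otimes g,\,f_j\otimes h_j\rangle\langle f_j\otimes h_j,\,f\otimes g\rangle
=\Big(\sum_{j}\langle f,f_j\rangle\langle f_j,f\rangle\Big)\otimes\Big(\sum_{j}\langle g,h_j\rangle\langle h_j,g\rangle\Big),
\]
and similarly $\langle (K\otimes L)^*(f\otimes g),(K\otimes L)^*(f\otimes g)\rangle=\langle K^*f,K^*f\rangle\otimes\langle L^*g,L^*g\rangle$ and $\langle f\otimes g,f\otimes g\rangle=\langle f,f\rangle\otimes\langle g,g\rangle$. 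So the chain above is exactly the $*$-$K\otimes L$-frame inequality for the elementary tensor $f\otimes g$, with bounds $A\otimes C$ and $B\otimes D$ (both strictly nonzero since $\|A\otimes C\|=\|A\|\|C\|$ and the spectra behave well under tensoring). By linearity and a density/continuity argument — the middle sum defines a bounded adjointable operator, namely the frame operator, which extends to all of $\mathcal H\otimes\mathcal K$ — the inequality persists on the whole module; here one invokes that general $h=\sum_i f_i\otimes g_i\in\mathcal H\otimes_{alg}\mathcal K$ have $\langle h,h\rangle\ge0$ and that the completion is dense.

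For the frame operator, I would check on elementary tensors that
\[
(S_f\otimes S_h)(f\otimes g)=S_f f\otimes S_h g=\sum_{j}\langle f,f_j\rangle f_j\otimes\sum_{j}\langle g,h_j\rangle h_j,
\]
compare with $\sum_j\langle f\otimes g,f_j\otimes h_j\rangle(f_j\otimes h_j)=\sum_j\big(\langle f,f_j\rangle\otimes\langle g,h_j\rangle\big)(f_j\otimes h_j)=\sum_j\langle f,f_j\rangle f_j\otimes\langle g,h_j\rangle h_j$, and observe these agree; since both sides are bounded adjointable operators agreeing on the dense submodule $\mathcal H\otimes_{alg}\mathcal K$, they coincide on $\mathcal H\otimes\mathcal K$. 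The main obstacle I anticipate is not any single inequality but the bookkeeping of the two cross-term tensorings needed to get from the two scalar-leg chains to the single chain on the tensor module — one must tensor the first inequality by a fixed positive element on one side and the second by a fixed positive element on the other, and justify each step by the order-preservation property quoted in the preliminaries — together with making the density argument rigorous, i.e. confirming that the finite sum in the middle genuinely extends to a bounded operator on the completion so that the inequality is not merely valid on $\mathcal H\otimes_{alg}\mathcal K$. Both are standard but deserve care.
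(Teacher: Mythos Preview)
Your central computation is incorrect. You claim that
\[
\sum_{j\in J}\langle f\otimes g,\,f_j\otimes h_j\rangle\langle f_j\otimes h_j,\,f\otimes g\rangle
=\Big(\sum_{j}\langle f,f_j\rangle\langle f_j,f\rangle\Big)\otimes\Big(\sum_{j}\langle g,h_j\rangle\langle h_j,g\rangle\Big),
\]
but the left-hand side equals $\sum_{j}\langle f,f_j\rangle\langle f_j,f\rangle\otimes\langle g,h_j\rangle\langle h_j,g\rangle$, a \emph{single} sum of elementary tensors, while the right-hand side expands as the \emph{double} sum $\sum_{j,k}\langle f,f_j\rangle\langle f_j,f\rangle\otimes\langle g,h_k\rangle\langle h_k,g\rangle$. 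These coincide only in trivial situations. The same confusion reappears in your frame-operator check: $S_ff\otimes S_hg$ is the double sum $\sum_{j,k}\langle f,f_j\rangle f_j\otimes\langle g,h_k\rangle h_k$, not $\sum_j\langle f,f_j\rangle f_j\otimes\langle g,h_j\rangle h_j$, so your claimed identification of the frame operator fails as well.

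The paper's own proof in fact computes the double sum
\[
\sum_{j\in J}\sum_{i\in I}\langle f\otimes h,\,f_j\otimes h_i\rangle\langle f_j\otimes h_i,\,f\otimes h\rangle,
\]
so the indexing $\{f_j\otimes h_j\}_{j\in J}$ in the statement should be read as the doubly indexed family $\{f_j\otimes h_i\}_{(j,i)\in J\times I}$. Once you switch to that family the factorization you want is genuinely valid, and the remainder of your outline --- tensor the two chains using order preservation of $\otimes$ by positive elements, rewrite in terms of $(K\otimes L)^*$, then extend by density --- is precisely the route the paper takes (it carries out the lower-bound step explicitly and defers the rest to \cite{AD}). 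Thus the only real gap is that you applied the factorization to the wrong family; with the diagonal family $\{f_j\otimes h_j\}_{j\in J}$ your argument does not go through.
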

\begin{proof}
Since $\{f_j\}_{j\in J}$ and $\{h_j\}_{j\in J}$ are $*$-$K$-frame and $*$-$L$-frame, respectively, so for any $f\in \mathcal H$ and $h\in \mathcal K$ we have
\begin{eqnarray*}
\sum_{j\in J}\sum_{i\in I}\langle f\otimes h, f_j\otimes h_i\rangle\langle f_j\otimes h_i, f\otimes h \rangle
&=& \sum_{j\in J}\langle f, f_j\rangle\langle f_j, f\rangle\otimes \sum_{i\in I}\langle h, h_i\rangle\langle h_i, h\rangle\\
&\geq& A\langle K^*f, K^*f\rangle A^*\otimes C\langle L^*h, L^*h\rangle C^*\\
&=& (A\otimes C) (\langle K^*f, K^*f\rangle\otimes\langle L^*h, L^*h\rangle) (A\otimes C)^*\\
&=& (A\otimes C) \langle (K^*\otimes L^*)(f\otimes h), (K^*\otimes L^*)(f\otimes h)\rangle(A\otimes C)^*\\
&=& (A\otimes C) \langle (K\otimes L)^*(f\otimes h), (K\otimes L)^*(f\otimes h)\rangle(A\otimes C)^*
\end{eqnarray*}
The rest of the proof is similar to the proof of Theorem 2.2 of \cite{AD}.
\end{proof}
The next corollary is the Theorem 2.2 of \cite{AD}.
\begin{corollary}
Suppose that $\{f_j\}_{j\in J}\subseteq \mathcal H$ and $\{h_j\}_{j\in J}\subseteq \mathcal K $ are  $*$-frames for $\mathcal H$ and $\mathcal K $ with $*$-frame operators $S_f$ and $S_h$ and $*$-frame bounds $A, B$ and $C, D$, respectively. Then $\{f_j\otimes h_j\}_{j\in J}$ is an $*$-frame for Hilbert $\mathcal A\otimes\mathcal B$-module $\mathcal H\otimes\mathcal K$ with $*$-frame operator $S_f\otimes S_h$ and lower $*$-frame bound $A\otimes C$ and upper $*$-frame bound $B\otimes D$.
\end{corollary}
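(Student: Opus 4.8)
The plan is to obtain the result as a direct specialization of Theorem \ref{tensor}. Indeed, a $*$-frame is precisely a $*$-$K$-frame with $K = I_{\mathcal H}$ the identity operator, since the lower bound inequality $A\langle f, f\rangle A^* \leq \sum_{j\in J}\langle f, f_j\rangle\langle f_j, f\rangle$ is exactly the lower $*$-$K$-frame inequality when $K^* = I_{\mathcal H}^* = I_{\mathcal H}$. So I would first note that $\{f_j\}_{j\in J}$ being a $*$-frame for $\mathcal H$ means it is a $*$-$I_{\mathcal H}$-frame, and likewise $\{h_j\}_{j\in J}$ is a $*$-$I_{\mathcal K}$-frame for $\mathcal K$, with the same bounds $A, B$ and $C, D$ and the same $*$-frame operators $S_f$ and $S_h$.

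Next I would invoke Theorem \ref{tensor} with $K = I_{\mathcal H}$ and $L = I_{\mathcal K}$. The theorem then yields that $\{f_j \otimes h_j\}_{j\in J}$ is a $*$-$(I_{\mathcal H}\otimes I_{\mathcal K})$-frame for $\mathcal H \otimes \mathcal K$, with $*$-frame operator $S_f \otimes S_h$, lower bound $A \otimes C$, and upper bound $B \otimes D$. To finish I would observe that $I_{\mathcal H} \otimes I_{\mathcal K} = I_{\mathcal H \otimes \mathcal K}$, so that $(I_{\mathcal H}\otimes I_{\mathcal K})^* = I_{\mathcal H\otimes\mathcal K}$ and the lower $*$-$(I_{\mathcal H}\otimes I_{\mathcal K})$-frame inequality
\[
(A\otimes C)\langle (I_{\mathcal H}\otimes I_{\mathcal K})^*(f\otimes h),\, (I_{\mathcal H}\otimes I_{\mathcal K})^*(f\otimes h)\rangle (A\otimes C)^* \leq \sum_{j\in J}\langle f\otimes h, f_j\otimes h_j\rangle\langle f_j\otimes h_j, f\otimes h\rangle
\]
collapses to $(A\otimes C)\langle f\otimes h, f\otimes h\rangle (A\otimes C)^* \leq \sum_{j\in J}\langle f\otimes h, f_j\otimes h_j\rangle\langle f_j\otimes h_j, f\otimes h\rangle$, which together with the upper bound is exactly the statement that $\{f_j\otimes h_j\}_{j\in J}$ is a $*$-frame for $\mathcal H\otimes\mathcal K$.

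There is really no substantive obstacle here, since all the work has already been done in Theorem \ref{tensor}; the only thing to be slightly careful about is the bookkeeping identity $I_{\mathcal H}\otimes I_{\mathcal K} = I_{\mathcal H\otimes\mathcal K}$ and the convergence of the double sum, both of which are routine (convergence of the middle sum follows from the $*$-Bessel property of each factor, exactly as handled in the proof of Theorem 2.2 of \cite{AD} referenced in Theorem \ref{tensor}). I would keep the proof to one or two lines, essentially: apply Theorem \ref{tensor} with $K$ and $L$ replaced by the respective identity operators and use that a $*$-$I$-frame is the same as a $*$-frame.
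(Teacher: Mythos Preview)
Your proposal is correct and matches the paper's treatment: the paper gives no explicit proof of this corollary, presenting it simply as the special case of Theorem~\ref{tensor} (and noting it recovers Theorem~2.2 of \cite{AD}). Your specialization $K=I_{\mathcal H}$, $L=I_{\mathcal K}$ together with $I_{\mathcal H}\otimes I_{\mathcal K}=I_{\mathcal H\otimes\mathcal K}$ is exactly the intended one-line argument.
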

Note that in this case if $\{f_j\}_{j\in J}\subseteq \mathcal H$ and $\{h_j\}_{j\in J}\subseteq \mathcal K $ are frames then this corollary conclude Lemma 3.1 of \cite{KHKH}.
\section{Perturbations of $*$-$K$-frames}
In this section some perturbation of $*$-$K$-frame are studied. The following theorem is a generalization of Theorem 7.1 of \cite{WJ}.
\begin{theorem}\label{pertur1}
Assume that $K, L\in Hom_{\mathcal A}^*(\mathcal H)$ with $R(L)\subseteq R(K)$ and $K$ has closed range. Let $\{f_j\}_{j\in J}$ be a $*$-$K$-frame with $*$-$K$-frame bounds $A, B$. If there exists a constant $M>0$, such that for any $f\in \mathcal H$
\begin{align}\label{min}
\|\sum_{j\in J}\langle f, f_j-h_j\rangle\langle f_j-h_j, f\rangle\|\leq M \min\{ \|\sum_{j\in J}\langle f, f_j\rangle\langle f_j, f\rangle\|, \|\sum_{j\in J}\langle f, h_j\rangle\langle h_j, f\rangle\|\}.
\end{align}
Then $\{h_j\}_{j\in J}$ is a $*$-$L$-frame. The converse is valid for any $K$ which is a co-isometry operator and $R(K)\subseteq R(L)$ with closed range $L$.
\end{theorem}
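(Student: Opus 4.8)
The plan is to derive, from the perturbation hypothesis \eqref{min}, the two $*$-frame inequalities defining a $*$-$L$-frame for $\{h_j\}_{j\in J}$. I would first record the elementary operator inequality that for a $*$-Bessel-type sum, if $\|\langle T_1f,T_1f\rangle\|\le M\|\langle T_2f,T_2f\rangle\|$ for all $f$, one can pass between the synthesis operators via Lemma \ref{norm1} (or Lemma \ref{norm2}, since $K$ has closed range here). Concretely, let $U_f, U_h$ be the analysis operators $f\mapsto\{\langle f,f_j\rangle\}$ and $f\mapsto\{\langle f,h_j\rangle\}$; the left half of \eqref{min} compared against the $\{f_j\}$ sum gives $\|U_f f - U_h f\| = \|(U_f-U_h)f\|\le \sqrt M\,\|U_f f\|$, and together with the triangle inequality in $\ell^2(\mathcal A)$ this yields $\|U_h f\|\le (1+\sqrt M)\|U_f f\|\le (1+\sqrt M)\|B\|\,\|f\|$, so $\{h_j\}_{j\in J}$ is a $*$-Bessel sequence with an explicit norm bound; by Lemma \ref{norm1} (converting from the norm inequality back to the order inequality, using that constants are central) this produces an upper $*$-frame bound of the form $n(1+\sqrt M)B$ for some $n>0$.

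Next I would establish the lower bound. Using the other half of the minimum in \eqref{min}, namely comparing against the $\{h_j\}$ sum, gives $\|(U_f-U_h)f\|\le\sqrt M\,\|U_h f\|$, hence $\|U_f f\|\le (1+\sqrt M)\|U_h f\|$, i.e. $\frac1{1+\sqrt M}\|U_f f\|\le \|U_h f\|$. Combining this with the lower $*$-$K$-frame inequality for $\{f_j\}$ — which, via Lemma \ref{norm1}, reads $\|AK^*f\|\le \|U_f f\|$ up to the usual norm/order translation — I obtain $\|A K^* f\|\le (1+\sqrt M)\|U_h f\|$ for all $f$, i.e. an inequality of the form $\lambda\|K^*f\|^2\le\|\sum_j\langle f,h_j\rangle\langle h_j,f\rangle\|$. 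Now invoke the hypothesis $R(L)\subseteq R(K)$ with $R(K)$ closed: Theorem \ref{adj2} gives $\lambda' LL^* \le KK^*$, equivalently $\|L^*f\|^2\le \mu\|K^*f\|^2$, so $\|L^*f\|$ is dominated by $\|U_h f\|$ up to a constant. Translating this norm inequality back into the $\mathcal A$-valued order inequality (again using Lemma \ref{norm1}/Theorem \ref{adj}, noting $\overline{R(U_h)}$ or the range of the relevant analysis operator is orthogonally complemented, or closed) yields a strictly nonzero lower bound $A'\in\mathcal A$ with $A'\langle L^*f,L^*f\rangle A'^*\le\sum_{j\in J}\langle f,h_j\rangle\langle h_j,f\rangle$. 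Together with the upper bound from the first paragraph, $\{h_j\}_{j\in J}$ is a $*$-$L$-frame.

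For the converse, assume $\{h_j\}_{j\in J}$ is a $*$-$L$-frame, $K$ is a co-isometry (so $KK^*=1_{\mathcal A}$, hence $K^*$ is an isometry and $\|K^*f\|=\|f\|$), and $R(K)\subseteq R(L)$ with $R(L)$ closed. Running the same estimates in the opposite direction: \eqref{min} compared against the $\{h_j\}$ sum shows $\{f_j\}$ is $*$-Bessel with an explicit upper bound, and compared against the $\{f_j\}$ sum gives $\frac{1}{1+\sqrt M}\|U_h f\|\le\|U_f f\|$; combined with the lower $*$-$L$-frame inequality for $\{h_j\}$ this produces $\lambda\|L^*f\|^2\le\|\sum_j\langle f,f_j\rangle\langle f_j,f\rangle\|$. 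Then $R(K)\subseteq R(L)$ with closed range, via Theorem \ref{adj2}, gives $\|K^*f\|\lesssim\|L^*f\|$, so $\|K^*f\|$ is controlled by $\|U_f f\|$, and since $K$ is a co-isometry this is $\|f\|$-comparable; translating back to the $\mathcal A$-valued inequality yields the lower $*$-$K$-frame bound, so $\{f_j\}_{j\in J}$ is a $*$-$K$-frame.

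The main obstacle I anticipate is the bookkeeping in translating repeatedly between the norm inequalities (where the triangle inequality in $\ell^2(\mathcal A)$ and the scalar $\min$ live comfortably) and the $\mathcal A$-valued order inequalities that the definition of $*$-$K$-frame demands — each such passage costs an application of Lemma \ref{norm1} or Theorem \ref{adj}/\ref{adj2} and requires the relevant range ($\overline{R(\text{analysis operator})}$, or $R(K)$, or $R(L)$) to be orthogonally complemented or closed, so one must check these hypotheses are actually supplied (closedness of $R(K)$, resp. $R(L)$, is given; the analysis operators of $*$-$K$-frames have the needed property, e.g. from the Bessel bound and the standing finitely/countably generated assumption). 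Keeping the constants $M$, $\sqrt M$, $1+\sqrt M$, and the Douglas-theorem constants $\lambda,\mu$ straight, and making sure the final lower bound element is \emph{strictly nonzero} in $\mathcal A$, is the only genuinely delicate point; everything else is a routine chain of Cauchy–Schwarz and triangle-inequality estimates of the kind already used in the proofs of Theorem \ref{main} and Proposition \ref{propos}.
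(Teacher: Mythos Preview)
Your forward direction is essentially the paper's argument: both of you pass to the analysis operators $U_f,U_h$, use the triangle inequality in $\ell^2(\mathcal A)$ together with the two halves of the $\min$ in \eqref{min} to get $\|U_hf\|\le(1+\sqrt M)\|U_ff\|$ and $\|U_ff\|\le(1+\sqrt M)\|U_hf\|$, deduce the Bessel bound for $\{h_j\}$ from the first and a norm lower bound $\|U_hf\|\gtrsim\|K^*f\|$ from the second, then invoke a Douglas-type result (Theorem~\ref{adj2}) to promote the norm inequality to the $\mathcal A$-valued order inequality, and finally use $R(L)\subseteq R(K)$ with $R(K)$ closed (exactly Proposition~\ref{propos}(iv)) to pass from a $*$-$K$-frame lower bound to a $*$-$L$-frame lower bound. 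This part is fine.

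Your converse, however, proves the wrong statement. You assume $\{h_j\}$ is a $*$-$L$-frame \emph{and} assume \eqref{min}, and then argue that $\{f_j\}$ is a $*$-$K$-frame. But $\{f_j\}$ being a $*$-$K$-frame is a standing hypothesis of the theorem; there is nothing to prove there. The ``converse'' in the statement means: assuming (in addition to the standing hypotheses) that $\{h_j\}$ is a $*$-$L$-frame, $K$ is a co-isometry, and $R(K)\subseteq R(L)$ with $R(L)$ closed, show that an $M>0$ satisfying \eqref{min} \emph{exists}. The paper does this by bounding $\|U_ff-U_hf\|\le\|U_ff\|+\|U_hf\|$ and then absorbing each summand into the other: to bound by $\|U_ff\|$, use the upper bound $\|U_hf\|\le\|D\|\,\|f\|=\|D\|\,\|K^*f\|$ (co-isometry) $\le(\|D\|/\|A\|)\|U_ff\|$; to bound by $\|U_hf\|$, use $\|U_ff\|\le\|B\|\,\|f\|=\|B\|\,\|K^*f\|\le\sqrt{\lambda}\,\|B\|\,\|L^*f\|$ (here $R(K)\subseteq R(L)$ with $R(L)$ closed is used via Theorem~\ref{adj2}) $\le(\sqrt\lambda\,\|B\|/\|C\|)\|U_hf\|$. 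Squaring and taking the minimum gives \eqref{min}. You need to rewrite your converse along these lines; as written it is circular.
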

\begin{proof}
Suppose that $f\in \mathcal H$, so we have
\begin{align}\label{pert1}
\|\sum_{j\in J}\langle f, h_j\rangle\langle h_j, f\rangle\|^{\frac{1}{2}}&=\|\{f, h_j\}\|\leq\|\{f, f_j-h_j\}\|+\|\{f, f_j\}\|\notag\\
&= \|\sum_{j\in J}\langle f, f_j-h_j\rangle\langle f_j-h_j, f\rangle\|^{\frac{1}{2}}+\|\sum_{j\in J}\langle f, f_j\rangle\langle f_j, f\rangle\|^{\frac{1}{2}}\notag\\
&\leq \sqrt{M}\|\sum_{j\in J}\langle f, f_j\rangle\langle f_j, f\rangle\|^{\frac{1}{2}}+\|\sum_{j\in J}\langle f, f_j\rangle\langle f_j, f\rangle\|^{\frac{1}{2}}\notag\\
&=(\sqrt{M}+1)\|\sum_{j\in J}\langle f, f_j\rangle\langle f_j, f\rangle\|^{\frac{1}{2}}\leq \|B\|(1+\sqrt{M})\|f\|.
\end{align}
So by \eqref{pert1} and Lemma \ref{norm} $\{h_j\}_{j\in J}$ is a $*$-Bessel sequence with $*$-Bessel bound $1+\sqrt{M}\|B\|1_{\mathcal A}$.\\
On the other hands we have
\begin{align*}
\|\sum_{j\in J}\langle f, f_j\rangle\langle f_j, f\rangle\|^{\frac{1}{2}}&\leq \|\sum_{j\in J}\langle f, f_j-h_j\rangle\langle f_j-h_j, f\rangle\|^{\frac{1}{2}}+\|\sum_{j\in J}\langle f, h_j\rangle\langle h_j, f\rangle\|^{\frac{1}{2}}\\
&\leq \sqrt{M}\|\sum_{j\in J}\langle f, h_j\rangle\langle h_j, f\rangle\|^{\frac{1}{2}}+\|\sum_{j\in J}\langle f, h_j\rangle\langle h_j, f\rangle\|^{\frac{1}{2}}\notag\\
&=(\sqrt{M}+1)\|\sum_{j\in J}\langle f, h_j\rangle\langle h_j, f\rangle\|^{\frac{1}{2}},
\end{align*}
Since $\{h_j\}_{j\in J}$ is a $*$-Bessel sequence we define operator $T:\mathcal H\to \ell^2(\mathcal A)$ given by $Tf=\{\langle f, h_j\rangle\}$ then we have
\begin{align*}
\|Tf\|^2\|\sum_{j\in J}\langle f, h_j\rangle\langle h_j, f\rangle\|&\geq \frac{1}{(\sqrt{M}+1)^2}\|\sum_{j\in J}\langle f, f_j\rangle\langle f_j, f\rangle\|\notag\\
&\geq\frac{\|A\|^2}{(\sqrt{M}+1)^2}\|K^*f\|^2.
\end{align*}
This means that $\|Tf\|^2\geq \frac{\|A\|^2}{\mu (\sqrt{M}+1)^2}\|K^*f\|^2$, so by Theorem \ref{adj2} there exists a $\lambda>0$ such that $\sum_{j\in J}\langle f, h_j\rangle\langle h_j, f\rangle\geq \sqrt{\lambda}1_{\mathcal A} \langle K^*f, K^*f\rangle \sqrt{\lambda}1_{\mathcal A}$. Thus by part $(iv)$ of Proposition \ref{propos} $\{h_j\}_{j\in J}$ is a $*$-$L$-frame.

For the converse suppose that $\{h_j\}_{j\in J}$ is a $*$-$L$-frame with the frame bounds $C, D$, respectively and $K$ is co-isometry so $\|K^*f\|=\|f\|$ for any $f\in \mathcal H$. Similarly we have
\begin{align*}
\|\sum_{j\in J}\langle f, f_j-h_j\rangle\langle f_j-h_j, f\rangle\|^{\frac{1}{2}}&\leq\|\sum_{j\in J}\langle f, f_j\rangle\langle f_j, f\rangle\|^{\frac{1}{2}}+\|\sum_{j\in J}\langle f, h_j\rangle\langle h_j, f\rangle\|^{\frac{1}{2}}\\
&\leq \|\sum_{j\in J}\langle f, f_j\rangle\langle f_j, f\rangle\|^{\frac{1}{2}}+\|D\|\|f\|\\
&=\|\sum_{j\in J}\langle f, f_j\rangle\langle f_j, f\rangle\|^{\frac{1}{2}}+\|D\|\|K^*f\|\\
&\leq \|\sum_{j\in J}\langle f, f_j\rangle\langle f_j, f\rangle\|^{\frac{1}{2}}+\frac{\|D\|}{\|A\|}\|\sum_{j\in J}\langle f, f_j\rangle\langle f_j, f\rangle\|^{\frac{1}{2}}\\
&=\left(1+\frac{\|D\|}{\|A\|}\right)\|\sum_{j\in J}\langle f, f_j\rangle\langle f_j, f\rangle\|^{\frac{1}{2}}.
\end{align*}
On the other hands we have
\begin{align*}
\|\sum_{j\in J}\langle f, f_j-h_j\rangle\langle f_j-h_j, f\rangle\|^{\frac{1}{2}}&\leq \|\sum_{j\in J}\langle f, f_j\rangle\langle f_j, f\rangle\|^{\frac{1}{2}}+\|\sum_{j\in J}\langle f, h_j\rangle\langle h_j, f\rangle\|^{\frac{1}{2}}\\
&\leq \|\sum_{j\in J}\langle f, h_j\rangle\langle h_j, f\rangle\|^{\frac{1}{2}}+\|B\|\|f\|\\
&=\|\sum_{j\in J}\langle f, h_j\rangle\langle h_j, f\rangle\|^{\frac{1}{2}}+\|B\|\|K^*f\|,
\end{align*}
Since $R(K)\subseteq R(L)$ and $R(L)$ is closed then by Theorem \ref{adj2} there exists $\lambda>0$ such that $\|K^*f\|^2\leq \lambda \|L^*f\|^2$, $f\in \mathcal H$, so we have
\begin{align*}
\|\sum_{j\in J}\langle f, f_j-h_j\rangle\langle f_j-h_j, f\rangle\|^{\frac{1}{2}}&\leq \|\sum_{j\in J}\langle f, h_j\rangle\langle h_j, f\rangle\|^{\frac{1}{2}}+\|B\|\|K^*f\|\\
&\leq \|\sum_{j\in J}\langle f, h_j\rangle\langle h_j, f\rangle\|^{\frac{1}{2}}+\sqrt{\lambda}\|B\|\|L^*f\|\\
&\leq \|\sum_{j\in J}\langle f, h_j\rangle\langle h_j, f\rangle\|^{\frac{1}{2}}+\frac{\sqrt{\lambda} \|B\|}{\|C\|}\|\sum_{j\in J}\langle f, h_j\rangle\langle h_j, f\rangle\|^{\frac{1}{2}}\\
&=\left(1+\frac{\sqrt{\lambda} \|B\|}{\|C\|}\right)\|\sum_{j\in J}\langle f, h_j\rangle\langle h_j, f\rangle\|^{\frac{1}{2}}.
\end{align*}
Hence with $M=\min\{ (1+\frac{\|D\|}{\|A\|})^2, (1+\frac{\sqrt{\lambda} \|B\|}{\|C\|})^2\}$, \eqref{min} holds.
\end{proof}
\begin{corollary}
Assume that $K, L\in Hom_{\mathcal A}^*(\mathcal H)$ with $R(L)\subseteq R(K)$ and $K$ has closed range. Let $\{f_j\}_{j\in J}$ be a $*$-$K$-frame with $*$-$K$-frame bounds $A, B\in \mathcal Z(\mathcal A)$. If there exists a constant $M>0$, such that for any $f\in \mathcal H$
\begin{align}\label{min}
\|\sum_{j\in J}\langle f, f_j-h_j\rangle\langle f_j-h_j, f\rangle\|\leq M \min\{ \|\sum_{j\in J}\langle f, f_j\rangle\langle f_j, f\rangle\|, \|\sum_{j\in J}\langle f, h_j\rangle\langle h_j, f\rangle\|\}.
\end{align}
Then $\{h_j\}_{j\in J}$ is a $*$-$L$-frame with $*$-frame bounds $mA, nB$, for some positive real numbers $m, n>0$.
\end{corollary}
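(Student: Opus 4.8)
The plan is to piggy-back on Theorem~\ref{pertur1}, which already shows that $\{h_j\}_{j\in J}$ is a $*$-$L$-frame; the only extra content of the corollary is to pin down the \emph{form} of the $*$-$L$-frame bounds, and this is exactly where the hypothesis $A,B\in\mathcal Z(\mathcal A)$ enters, via the converse half of Lemma~\ref{norm1}. So the strategy has three steps: first, extract from the proof of Theorem~\ref{pertur1} a norm--form frame inequality for $\{h_j\}$ whose constants involve $A$ and $B$; second, replace $K^{*}$ by $L^{*}$ using $R(L)\subseteq R(K)$ and $K$ closed range; third, feed the result into Lemma~\ref{norm1} to upgrade it to a genuine $\mathcal A$-module inequality with bounds $mA$ and $nB$.

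For the first step I would reproduce the two triangle--inequality estimates from the proof of Theorem~\ref{pertur1} (the one leading to \eqref{pert1}, and the ``on the other hand'' one obtained by interchanging $f_j$ and $h_j$), which rest on the perturbation hypothesis \eqref{min} and give, for every $f\in\mathcal H$,
\[
\Big\|\sum_{j\in J}\langle f,h_j\rangle\langle h_j,f\rangle\Big\|\ \leq\ (1+\sqrt M)^{2}\,\Big\|\sum_{j\in J}\langle f,f_j\rangle\langle f_j,f\rangle\Big\|
\]
together with the same inequality with the roles of $\{f_j\}$ and $\{h_j\}$ interchanged. Taking norms in the defining $*$-$K$-frame inequality \eqref{AB} for $\{f_j\}$ and using both $\|X\|\leq\|Y\|$ whenever $0\leq X\leq Y$ and the identity $\|C\langle g,g\rangle C^{*}\|=\|Cg\|^{2}$, one gets $\|AK^{*}f\|^{2}\leq\|\sum_{j\in J}\langle f,f_j\rangle\langle f_j,f\rangle\|\leq\|Bf\|^{2}$. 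Combining these gives
\[
\frac{1}{(1+\sqrt M)^{2}}\,\|AK^{*}f\|^{2}\ \leq\ \Big\|\sum_{j\in J}\langle f,h_j\rangle\langle h_j,f\rangle\Big\|\ \leq\ (1+\sqrt M)^{2}\,\|Bf\|^{2},\qquad f\in\mathcal H .
\]

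For the second step, since $R(L)\subseteq R(K)$ and $R(K)$ is closed, Theorem~\ref{adj2} gives $\lambda>0$ with $\lambda LL^{*}\leq KK^{*}$, hence $\lambda\langle L^{*}f,L^{*}f\rangle\leq\langle K^{*}f,K^{*}f\rangle$ for all $f$; flanking by $A$ and taking norms gives $\lambda\|AL^{*}f\|^{2}\leq\|AK^{*}f\|^{2}$. Writing $A_{0}:=\frac{\sqrt\lambda}{1+\sqrt M}\,A$ and $B_{0}:=(1+\sqrt M)\,B$ --- both strictly nonzero and both in $\mathcal Z(\mathcal A)$ --- we obtain $\|A_{0}L^{*}f\|^{2}\leq\|\sum_{j\in J}\langle f,h_j\rangle\langle h_j,f\rangle\|\leq\|B_{0}f\|^{2}$ for all $f$. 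For the third step, $\{h_j\}$ is a $*$-Bessel sequence by Theorem~\ref{pertur1}, so its analysis operator $T\colon\mathcal H\to\ell^{2}(\mathcal A)$, $Tf=\{\langle f,h_j\rangle\}_{j\in J}$, is adjointable; assuming $\overline{R(T)}$ is orthogonally complemented, the converse part of Lemma~\ref{norm1}, applied with $(L,A_{0},B_{0})$ in place of $(K,A,B)$, shows that $\{h_j\}_{j\in J}$ is a $*$-$L$-frame with bounds $m'A_{0}$ and $n'B_{0}$ for some $m',n'>0$. Taking $m=\frac{m'\sqrt\lambda}{1+\sqrt M}$ and $n=n'(1+\sqrt M)$ then finishes the proof.

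The main obstacle is the technical hypothesis needed in the last step: Lemma~\ref{norm1} requires $\overline{R(T)}$ to be orthogonally complemented (or, through Lemma~\ref{norm2}, $R(T)$ to be closed), which is not automatic in a Hilbert $C^{*}$-module. One would either add this assumption to the statement, observe that it is inherited from the standing hypotheses of Theorem~\ref{pertur1}, or work in a setting (e.g.\ $\mathcal H$ self-dual) where it holds for free. Everything else is routine manipulation of $C^{*}$-valued inequalities; the one conceptual point is that multiplication by a central element is an adjointable operator, so that passing between the norm form and the module form of the frame inequality preserves the shape of the bounds.
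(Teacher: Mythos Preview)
Your approach is essentially the paper's: the paper's proof is the one-liner ``obvious by Theorem~\ref{pertur1} and Lemma~\ref{norm2},'' and what you have written is exactly a fleshing-out of that idea --- extract the norm-form inequalities from the proof of Theorem~\ref{pertur1}, pass from $K^{*}$ to $L^{*}$ via $R(L)\subseteq R(K)$, and then invoke the converse direction of the norm lemma to recover module-valued bounds of the form $mA,\,nB$. The only difference is that the paper invokes Lemma~\ref{norm2} (closed range of the analysis operator $T$) whereas you invoke Lemma~\ref{norm1} ($\overline{R(T)}$ orthogonally complemented); these are parallel hypotheses and neither is stated explicitly in the corollary, so the ``obstacle'' you flag is present in the paper's own argument as well.
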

\begin{proof}
The proof is obvious by Theorem \ref{pertur1} and Lemma \ref{norm2}.
\end{proof}
\begin{corollary}
Assume that $K\in Hom_{\mathcal A}^*(\mathcal H)$ such that $K$ has closed range. Let $\{f_j\}_{j\in J}$ be a $*$-$K$-frame with $*$-$K$-frame bounds $A, B>0$, respectively. If there exists a constant $M>0$, such that
\begin{align*}
\|\sum_{j\in J}\langle f, f_j-h_j\rangle\langle f_j-h_j, f\rangle\|\leq M \min\{ \|\sum_{j\in J}\langle f, f_j\rangle\langle f_j, f\rangle\|, \|\sum_{j\in J}\langle f, h_j\rangle\langle h_j, f\rangle\|\}, \quad f\in \mathcal H
\end{align*}
then $\{h_j\}_{j\in J}$ is a $*$-$K$-frame. The converse is valid for any $K$ is a co-isometry operator.
\end{corollary}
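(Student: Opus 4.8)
The plan is to recognize this corollary as the diagonal case $L=K$ of Theorem \ref{pertur1}, so that essentially no new work is needed beyond checking that the hypotheses specialize correctly. First I would record that the frame bounds $A,B>0$ here are scalars, hence in particular strictly nonzero elements of $\mathcal A$ (and central), so there is no loss in applying Theorem \ref{pertur1} with these bounds.

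For the forward implication, I would set $L=K$ in Theorem \ref{pertur1}. Then the condition $R(L)\subseteq R(K)$ becomes $R(K)\subseteq R(K)$, which is automatic, and $K$ has closed range by assumption. The perturbation inequality assumed here is exactly inequality \eqref{min} of that theorem with $L=K$. Hence Theorem \ref{pertur1} applies verbatim and yields that $\{h_j\}_{j\in J}$ is a $*$-$L$-frame, i.e. a $*$-$K$-frame, as desired. For the converse, suppose $K$ is a co-isometry, so $KK^*=1_{\mathcal A}$; then for every $f\in\mathcal H$ we have $f=K(K^*f)$, which shows $K$ is surjective, whence $R(K)=\mathcal H$ is closed. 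Taking once more $L=K$, the hypotheses of the converse part of Theorem \ref{pertur1} — namely that $K$ is a co-isometry, that $R(K)\subseteq R(L)$, and that $R(L)$ is closed — all hold. Therefore the converse of Theorem \ref{pertur1} produces the constant $M>0$ for which the stated perturbation inequality is valid.

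Since every assertion reduces to the already-established Theorem \ref{pertur1}, I do not expect any genuine obstacle; the only point requiring a line of justification is the observation that a co-isometry has closed (indeed full) range, which is needed so that the invocation of Theorem \ref{adj2} inside the proof of Theorem \ref{pertur1} goes through with $L=K$. Alternatively, for a self-contained argument one could simply repeat the (now shorter) chain of norm estimates from the proof of Theorem \ref{pertur1}, using $\|K^*f\|\le\|K\|\,\|f\|$ together with $\|K^*f\|=\|f\|$ in the co-isometry case, Lemma \ref{norm}, and Theorem \ref{adj2}, with all occurrences of $L$ replaced by $K$.
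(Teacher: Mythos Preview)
Your proposal is correct and matches the paper's approach exactly: the corollary is stated without proof in the paper, being the immediate specialization $L=K$ of Theorem~\ref{pertur1}. Your extra remark that a co-isometry is surjective is harmless but unnecessary here, since the corollary already carries the standing hypothesis that $K$ has closed range, which is all that is needed when invoking the converse of Theorem~\ref{pertur1} with $L=K$.
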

The following  theorem is perturbation result for  $*$-$K$-frames which is a generalization of Theorem 7.3 of \cite{WJ}.
\begin{theorem}\label{pertur2}
Assume that $K, L\in Hom^*_{\mathcal A}(\mathcal H)$ with $R(L)\subseteq R(K)$ with closed range $K$. Let $\{f_j\}_{j\in J}$ be a $*$-$K$-frame, with $*$-$K$-frame bounds $A, B$. If there exists $\alpha, \beta, \gamma\geq0$ such that $\max\{\alpha+\frac{\gamma}{\|A\|}, \beta\}<1$ and
\begin{align}\label{abg}
\|\sum_{j\in J}\langle f, f_j-h_j\rangle\langle f_j-h_j, f\rangle\|^{\frac{1}{2}}\leq  \alpha\|\sum_{j\in J}\langle f, f_j\rangle\langle f_j, f\rangle\|^{\frac{1}{2}}+\beta\|\sum_{j\in J}\langle f, h_j\rangle\langle h_j, f\rangle\|^{\frac{1}{2}}+\gamma \|K^*f\|
\end{align}
Then $\{h_j\}_{j\in J}$ is a $*$-$L$-frame.
\end{theorem}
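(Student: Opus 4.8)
The plan is to show that $\{h_j\}_{j\in J}$ is itself a $*$-$K$-frame and then to invoke part $(iv)$ of Proposition \ref{propos} (which applies since $R(L)\subseteq R(K)$ and $K$ has closed range) to conclude that $\{h_j\}_{j\in J}$ is a $*$-$L$-frame. The main technical device, already exploited in the proof of Theorem \ref{pertur1}, is that for any $*$-Bessel sequence $\{g_j\}_{j\in J}$ the quantity $\big\|\sum_{j\in J}\langle f,g_j\rangle\langle g_j,f\rangle\big\|^{1/2}$ is the $\ell^2(\mathcal A)$-norm of the analysis vector $\{\langle f,g_j\rangle\}_{j\in J}$; hence, applied to the three sequences $\{f_j\}$, $\{h_j\}$ and $\{f_j-h_j\}$, one has the triangle inequality
\[\Big\|\sum_{j\in J}\langle f,h_j\rangle\langle h_j,f\rangle\Big\|^{1/2}\le \Big\|\sum_{j\in J}\langle f,f_j-h_j\rangle\langle f_j-h_j,f\rangle\Big\|^{1/2}+\Big\|\sum_{j\in J}\langle f,f_j\rangle\langle f_j,f\rangle\Big\|^{1/2}\]
and its symmetric counterpart. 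I will also use that, from the lower bound of \eqref{AB} together with the invertibility of $A$ (this is the coefficient appearing in \eqref{abg}; cf.\ the first part of Lemma \ref{norm1}), one has $\|K^{*}f\|\le \tfrac{1}{\|A\|}\,\big\|\sum_{j\in J}\langle f,f_j\rangle\langle f_j,f\rangle\big\|^{1/2}$ for every $f\in\mathcal H$.

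For the upper ($*$-Bessel) bound I would substitute \eqref{abg} into the triangle inequality above, then replace $\gamma\|K^{*}f\|$ by $\tfrac{\gamma}{\|A\|}\big\|\sum_{j\in J}\langle f,f_j\rangle\langle f_j,f\rangle\big\|^{1/2}$, obtaining
\[(1-\beta)\Big\|\sum_{j\in J}\langle f,h_j\rangle\langle h_j,f\rangle\Big\|^{1/2}\le \Big(1+\alpha+\frac{\gamma}{\|A\|}\Big)\Big\|\sum_{j\in J}\langle f,f_j\rangle\langle f_j,f\rangle\Big\|^{1/2}.\]
Since $\beta<1$ and $\{f_j\}_{j\in J}$ is a $*$-$K$-frame with upper bound $B$, this gives $\big\|\sum_{j\in J}\langle f,h_j\rangle\langle h_j,f\rangle\big\|^{1/2}\le \tfrac{1+\alpha+\gamma/\|A\|}{1-\beta}\|B\|\,\|f\|$, so by Lemma \ref{norm} the sequence $\{h_j\}_{j\in J}$ is $*$-Bessel. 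In particular $Tf:=\{\langle f,h_j\rangle\}_{j\in J}$ defines an adjointable operator $T:\mathcal H\to\ell^2(\mathcal A)$ whose adjoint is the synthesis operator $\{g_j\}\mapsto\sum_{j\in J}g_jh_j$.

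For the lower bound I would use the symmetric triangle inequality $\big\|\sum_{j}\langle f,f_j\rangle\langle f_j,f\rangle\big\|^{1/2}\le \big\|\sum_{j}\langle f,f_j-h_j\rangle\langle f_j-h_j,f\rangle\big\|^{1/2}+\big\|\sum_{j}\langle f,h_j\rangle\langle h_j,f\rangle\big\|^{1/2}$, substitute \eqref{abg}, again bound $\gamma\|K^{*}f\|$ as above, and rearrange using $\alpha+\gamma/\|A\|<1$ to get
\[\Big\|\sum_{j\in J}\langle f,h_j\rangle\langle h_j,f\rangle\Big\|^{1/2}\ge \frac{1-\alpha-\gamma/\|A\|}{1+\beta}\Big\|\sum_{j\in J}\langle f,f_j\rangle\langle f_j,f\rangle\Big\|^{1/2}\ge c\,\|K^{*}f\|\]
for a suitable constant $c>0$ depending only on $\alpha,\beta,\gamma$ and the lower $*$-$K$-frame bound $A$. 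Equivalently $\|(cK)^{*}f\|^2\le\|Tf\|^2$ for all $f\in\mathcal H$, and since $T^{*}$ is adjointable this upgrades (by the equivalence of the operator inequality with its quadratic-form version, as in Theorem \ref{adj2} and the proof of Theorem \ref{pertur1}) to $c^2KK^{*}\le T^{*}T$, that is, $(c\,1_{\mathcal A})\langle K^{*}f,K^{*}f\rangle(c\,1_{\mathcal A})^{*}\le \sum_{j\in J}\langle f,h_j\rangle\langle h_j,f\rangle$ for every $f$. Together with the $*$-Bessel estimate this shows $\{h_j\}_{j\in J}$ is a $*$-$K$-frame with strictly nonzero scalar bounds, and then Proposition \ref{propos}$(iv)$, applied with $\{h_j\}_{j\in J}$ in the role of the $*$-$K$-frame, yields that $\{h_j\}_{j\in J}$ is a $*$-$L$-frame.

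I expect the delicate point to be the last passage above: upgrading the scalar inequality $c\|K^{*}f\|\le\big\|\sum_{j}\langle f,h_j\rangle\langle h_j,f\rangle\big\|^{1/2}$ to the genuine $*$-$K$-frame operator inequality. This is where one appeals to the fact that a positive element of $Hom^{*}_{\mathcal A}(\mathcal H)$ is determined by its associated quadratic form (the form of Theorem \ref{adj2} used in Theorem \ref{pertur1}), and it is also the only place where adjointability of the synthesis operator of $\{h_j\}$ and the closed-range hypothesis on $K$ (inherited by Proposition \ref{propos}$(iv)$) really enter. Everything else is routine bookkeeping with the triangle inequality and the constants $\alpha,\beta,\gamma,\|A\|,\|B\|$.
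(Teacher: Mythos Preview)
Your proposal is correct and follows essentially the same approach as the paper: both arguments use the $\ell^2(\mathcal A)$ triangle inequality together with \eqref{abg} and the bound $\gamma\|K^*f\|\le \tfrac{\gamma}{\|A\|}\big\|\sum_j\langle f,f_j\rangle\langle f_j,f\rangle\big\|^{1/2}$ to obtain the $*$-Bessel estimate and the norm lower bound $\|Tf\|\ge c\|K^*f\|$, then upgrade the latter to the operator inequality via Theorem~\ref{adj2} and finally pass from the $*$-$K$-frame to the $*$-$L$-frame using Proposition~\ref{propos}$(iv)$. Your identification of the ``delicate point'' (the norm-to-operator upgrade through Theorem~\ref{adj2}) is exactly where the paper also appeals to that theorem without further justification.
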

\begin{proof}
For any $f\in \mathcal H$ we have
\begin{align*}
\|\sum_{j\in J}\langle f, h_j\rangle\langle h_j, f\rangle\|^{\frac{1}{2}}&=\|\{f, h_j\}\|\leq\|\{f, f_j-h_j\}\|+\|\{f, f_j\}\|\\
&= \|\sum_{j\in J}\langle f, f_j-h_j\rangle\langle f_j-h_j, f\rangle\|^{\frac{1}{2}}+\|\sum_{j\in J}\langle f, f_j\rangle\langle f_j, f\rangle\|^{\frac{1}{2}}\\
&\leq (1+\alpha)\|\sum_{j\in J}\langle f, f_j\rangle\langle f_j, f\rangle\|^{\frac{1}{2}}+\beta\|\sum_{j\in J}\langle f, h_j\rangle\langle h_j, f\rangle\|^{\frac{1}{2}}+\gamma \|K^*f\|
\end{align*}
So
\begin{align*}
(1-\beta)\|\sum_{j\in J}\langle f, h_j\rangle\langle h_j, f\rangle\|^{\frac{1}{2}}&\leq (1+\alpha)\|\sum_{j\in J}\langle f, f_j\rangle\langle f_j, f\rangle\|^{\frac{1}{2}}+\gamma \|K^*f\|\\
&\leq ((1+\alpha)+\frac{\gamma}{\|A\|})\|\sum_{j\in J}\langle f, f_j\rangle\langle f_j, f\rangle\|^{\frac{1}{2}}
\end{align*}
Hence
\begin{align}\label{l}
\|\sum_{j\in J}\langle f, h_j\rangle\langle h_j, f\rangle\|^{\frac{1}{2}}&\leq \|B\| (1+\frac{\alpha+\beta+\frac{\gamma}{\|A\|}}{1-\beta})\|f\|.
\end{align}
Similarly 
\begin{align*}
\|\sum_{j\in J}\langle f, h_j\rangle\langle h_j, f\rangle\|^{\frac{1}{2}}\geq (1-\alpha-\frac{\gamma}{\|A\|})\|\sum_{j\in J}\langle f, f_j\rangle\langle f_j, f\rangle\|^{\frac{1}{2}}-\beta \|\sum_{j\in J}\langle f, h_j\rangle\langle h_j, f\rangle\|^{\frac{1}{2}}
\end{align*}
Since $\{h_j\}_{j\in J}$ is a $*$-Bessel sequence the operator $T:\mathcal H\to \ell^2(\mathcal A)$  defined by $Tf=\{\langle f, h_j\rangle\}$ is well-defined. Thus $R(L)\subseteq R(K)$ and Theorem \ref{adj2}imply that there exists $\mu>0$ such that $\mu\|K^*f\|^2\geq \|L^*f\|^2$, $f\in \mathcal H$. So we have
\begin{align}\label{g}
\|Tf\|=\|\sum_{j\in J}\langle f, h_j\rangle\langle h_j, f\rangle\|^{\frac{1}{2}}&\geq (1-\frac{\alpha+\beta+\frac{\gamma}{\|A\|}}{1+\beta})\|\sum_{j\in J}\langle f, f_j\rangle\langle f_j, f\rangle\|^{\frac{1}{2}}\notag\\
&\geq \|A\|(1-\frac{\alpha+\beta+\frac{\gamma}{\|A\|}}{1+\beta})\|K^*f\|\ \ \ f\in \mathcal H.
\end{align}
So by \eqref{l}, \eqref{g}, Theorem \ref{adj2}, Lemma \ref{norm} and part $(iv)$ of Proposition \ref{propos} $\{h_j\}_{j\in J}$ is a $*$-$L$-frame.
\end{proof}
\begin{corollary}
Assume that $K, L\in Hom^*_{\mathcal A}(\mathcal H)$ with $R(L)\subseteq R(K)$ with closed range $K$. Let $\{f_j\}_{j\in J}$ be a $*$-$K$-frame, with $*$-$K$-frame bounds $A, B\in \mathcal Z(\mathcal A)$. If there exists $\alpha, \beta, \gamma\geq0$ such that $\max\{\alpha+\frac{\gamma}{\|A\|}, \beta\}<1$ and
\begin{align}\label{abg}
\|\sum_{j\in J}\langle f, f_j-h_j\rangle\langle f_j-h_j, f\rangle\|^{\frac{1}{2}}\leq  \alpha\|\sum_{j\in J}\langle f, f_j\rangle\langle f_j, f\rangle\|^{\frac{1}{2}}+\beta\|\sum_{j\in J}\langle f, h_j\rangle\langle h_j, f\rangle\|^{\frac{1}{2}}+\gamma \|K^*f\|
\end{align}
Then $\{h_j\}_{j\in J}$ is a $*$-$L$-frame with $*$-frame bounds $mA, nB$, for some positive real numbers $m, n>0$.
\end{corollary}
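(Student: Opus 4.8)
The plan is to re-use the scalar estimates obtained inside the proof of Theorem~\ref{pertur2} and then feed them into the converse half of Lemma~\ref{norm2}, exploiting that here $A,B\in\mathcal Z(\mathcal A)$ are strictly nonzero and therefore invertible.

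First, Theorem~\ref{pertur2} already tells us that $\{h_j\}_{j\in J}$ is a $*$-$L$-frame, in particular a $*$-Bessel sequence, so the analysis operator $Tf=\{\langle f,h_j\rangle\}_{j\in J}$ is a well-defined adjointable map $\mathcal H\to\ell^2(\mathcal A)$. What its proof supplies en route are the two scalar bounds \eqref{l} and \eqref{g}: on the one hand
\[
\Big\|\sum_{j\in J}\langle f,h_j\rangle\langle h_j,f\rangle\Big\|^{1/2}\le \|B\|\Big(1+\tfrac{\alpha+\beta+\gamma/\|A\|}{1-\beta}\Big)\|f\|,\qquad f\in\mathcal H,
\]
and on the other hand a matching lower bound of the form $\big\|\sum_{j}\langle f,h_j\rangle\langle h_j,f\rangle\big\|^{1/2}\ge c\,\|L^*f\|$ for a constant $c>0$; the latter uses $R(L)\subseteq R(K)$ with $R(K)$ closed together with Theorem~\ref{adj2} to pass from $\|K^*f\|$ to $\|L^*f\|$, and $c>0$ precisely because $\alpha+\gamma/\|A\|<1$.

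Next I would recast these two inequalities in the form \eqref{normeq1} with bounds that are scalar multiples of $A$ and $B$. Since $B$ is invertible we have $\|f\|\le\|B^{-1}\|\,\|Bf\|$, which turns the upper bound into $\big\|\sum_{j}\langle f,h_j\rangle\langle h_j,f\rangle\big\|\le\|nB\,f\|^2$ for a suitable real $n>0$; since $\|AL^*f\|\le\|A\|\,\|L^*f\|$, the lower bound becomes $\big\|\sum_{j}\langle f,h_j\rangle\langle h_j,f\rangle\big\|\ge\|mA\,L^*f\|^2$ for a suitable real $m>0$. As $A$ and $B$ are central, so are $mA$ and $nB$, so \eqref{normeq1} holds for $\{h_j\}_{j\in J}$ relative to $L$ with central bounds $mA$ and $nB$. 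The converse part of Lemma~\ref{norm2} then produces positive reals $m',n'$ such that $\{h_j\}_{j\in J}$ is a $*$-$L$-frame with bounds $m'mA$ and $n'nB$, and relabelling the positive scalars completes the proof.

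The point that needs care is exactly this conversion step: the output of Theorem~\ref{pertur2} controls $\big\|\sum_{j}\langle f,h_j\rangle\langle h_j,f\rangle\big\|$ by $\|f\|$ and by $\|L^*f\|$, whereas Lemma~\ref{norm2} asks for control by $\|Bf\|$ and $\|AL^*f\|$, and closing that gap is precisely where the strict nonzeroness (invertibility) of $A$ and $B$ is used. One also has to make sure the closed-range hypothesis on the analysis operator required by Lemma~\ref{norm2} is available for $\{h_j\}_{j\in J}$; failing that, under the orthogonal-complementation assumption one would appeal to Lemma~\ref{norm1} instead.
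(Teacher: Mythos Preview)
Your proposal is correct and follows essentially the same approach as the paper: the paper's proof consists of the single sentence ``It can be proved by using Theorem~\ref{pertur2} and Lemma~\ref{norm2},'' and you have simply unpacked what that entails, feeding the scalar estimates \eqref{l} and \eqref{g} from the proof of Theorem~\ref{pertur2} into the converse direction of Lemma~\ref{norm2}. You have in fact been more careful than the paper in flagging the closed-range hypothesis on the analysis operator that Lemma~\ref{norm2} requires; the paper does not comment on this point.
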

\begin{proof}
It can be proved by using Theorem \ref{pertur2} and Lemma \ref{norm2}.
\end{proof}
\begin{corollary}
Assume that $K \in Hom^*_{\mathcal A}(\mathcal H)$ with closed range. Let $\{f_j\}_{j\in J}$ be a $*$-$K$-frame, with $*$-$K$-frame bounds $A, B$. If there exists $\alpha, \beta, \gamma\geq0$ such that $\max\{\alpha+\frac{\gamma}{\|A\|}, \beta\}<1$ and
\begin{align}\label{abg}
\|\sum_{j\in J}\langle f, f_j-h_j\rangle\langle f_j-h_j, f\rangle\|^{\frac{1}{2}}\leq  \alpha\|\sum_{j\in J}\langle f, f_j\rangle\langle f_j, f\rangle\|^{\frac{1}{2}}+\|\sum_{j\in J}\langle f, h_j\rangle\langle h_j, f\rangle\|^{\frac{1}{2}}+\gamma \|K^*f\|
\end{align}
Then $\{h_j\}_{j\in J}$ is a $*$-$K$-frame.
\end{corollary}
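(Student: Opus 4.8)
The plan is to obtain this statement as the special case $L=K$ of Theorem \ref{pertur2}. First I would verify that all hypotheses of that theorem hold in the present setting: with $L=K$ the inclusion $R(L)\subseteq R(K)$ is trivial, and the assumption that $K$ has closed range is the only remaining requirement on the operators; by hypothesis $\{f_j\}_{j\in J}$ is a $*$-$K$-frame with bounds $A,B$, the constants $\alpha,\beta,\gamma\geq 0$ satisfy $\max\{\alpha+\gamma/\|A\|,\beta\}<1$, and the displayed perturbation inequality is exactly the one appearing in Theorem \ref{pertur2} (with $\beta$ understood as the coefficient of the middle term). Hence Theorem \ref{pertur2} applies and yields that $\{h_j\}_{j\in J}$ is a $*$-$L$-frame; since $L=K$ this is precisely the assertion.

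Before invoking this reduction, I would check that the proof of Theorem \ref{pertur2} does not secretly rely on $L\neq K$. Tracing through it, the operator $L$ enters only twice: once through $R(L)\subseteq R(K)$ together with Theorem \ref{adj2}, which produces $\mu>0$ with $\mu\|K^*f\|^2\geq\|L^*f\|^2$ (and when $L=K$ one simply takes $\mu=1$), and once at the very end through part $(iv)$ of Proposition \ref{propos}, whose hypothesis that $R(L)\subseteq R(K)$ with $K$ of closed range still holds when $L=K$. Thus every step goes through verbatim and no extra argument is needed.

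I do not expect any genuine obstacle here: the corollary is entirely subsumed by Theorem \ref{pertur2}, so the proof is a single-line invocation of that theorem once the hypotheses have been matched up. If a self-contained argument were desired, one could instead copy the proof of Theorem \ref{pertur2} with $L$ replaced everywhere by $K$, using the trivial inequality $\|K^*f\|^2\leq\|K^*f\|^2$ in place of the estimate furnished by Theorem \ref{adj2}; this reproduces the same norm estimates and the same concluding appeal to Lemma \ref{norm} and Proposition \ref{propos}.
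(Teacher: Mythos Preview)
Your proposal is correct and matches the paper's approach: the corollary has no separate proof in the paper and is simply the special case $L=K$ of Theorem \ref{pertur2}, exactly as you describe. Your observation that the missing $\beta$ in front of the middle term is a typo (so that the hypothesis matches that of Theorem \ref{pertur2}) is also the intended reading.
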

\begin{corollary}
Let $\{f_j\}_{j\in J}$ be a $*$-frame, with $*$-frame bounds $A, B$. If there exists $\alpha, \beta, \gamma\geq0$ such that $\max\{\alpha+\frac{\gamma}{\|A\|}, \beta\}<1$ and
\begin{align}\label{abg}
\|\sum_{j\in J}\langle f, f_j-h_j\rangle\langle f_j-h_j, f\rangle\|^{\frac{1}{2}}\leq  \alpha\|\sum_{j\in J}\langle f, f_j\rangle\langle f_j, f\rangle\|^{\frac{1}{2}}+\|\sum_{j\in J}\langle f, h_j\rangle\langle h_j, f\rangle\|^{\frac{1}{2}}+\gamma \|K^*f\|
\end{align}
Then $\{h_j\}_{j\in J}$ is a $*$-frame.
\end{corollary}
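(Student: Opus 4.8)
The plan is to obtain this statement as the special case $K=L=\mathrm{id}_{\mathcal H}$ of Theorem~\ref{pertur2} (equivalently, of the corollary immediately preceding it). The first step is the observation that a sequence $\{f_j\}_{j\in J}\subseteq\mathcal H$ is a $*$-frame with bounds $A,B$ if and only if it is a $*$-$\mathrm{id}_{\mathcal H}$-frame with bounds $A,B$: for $K=\mathrm{id}_{\mathcal H}$ one has $K^*=\mathrm{id}_{\mathcal H}$, so $A\langle K^*f,K^*f\rangle A^*=A\langle f,f\rangle A^*$ and the inequality \eqref{AB} defining a $*$-$K$-frame collapses to the definition of a $*$-frame. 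Applying the same identification to $\{h_j\}_{j\in J}$ converts the desired conclusion ``$\{h_j\}_{j\in J}$ is a $*$-frame'' into ``$\{h_j\}_{j\in J}$ is a $*$-$\mathrm{id}_{\mathcal H}$-frame''.

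Next I would verify that the structural hypotheses of Theorem~\ref{pertur2} hold trivially for $K=L=\mathrm{id}_{\mathcal H}$: the identity is adjointable, its range $\mathcal H$ is closed, and $R(L)=\mathcal H\subseteq\mathcal H=R(K)$. Moreover $\|K^*f\|=\|f\|$, so the perturbation inequality assumed in the corollary coincides with the one in Theorem~\ref{pertur2} (with the middle coefficient read as $\beta$, matching the theorem), and the scalar smallness condition $\max\{\alpha+\tfrac{\gamma}{\|A\|},\beta\}<1$ is exactly the one required there.

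With these identifications in place, Theorem~\ref{pertur2} applies directly and yields strictly nonzero elements of $\mathcal A$ witnessing that $\{h_j\}_{j\in J}$ is a $*$-$\mathrm{id}_{\mathcal H}$-frame, hence a $*$-frame. I expect essentially no obstacle: the corollary is a pure specialization, and the only points deserving a sentence of justification are the identification $*$-frame $=$ $*$-$\mathrm{id}_{\mathcal H}$-frame and the (vacuous) checking that the range and closed-range hypotheses of the parent theorem are met by the identity operator. All the substantive work---the two triangle-inequality chains, the $*$-Bessel estimate, and the appeal to Theorem~\ref{adj2} together with part $(iv)$ of Proposition~\ref{propos}---is already carried out in the proof of Theorem~\ref{pertur2}.
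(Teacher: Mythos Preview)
Your proposal is correct and matches the paper's intended approach: the corollary is stated without proof precisely because it is the specialization $K=L=\mathrm{id}_{\mathcal H}$ of Theorem~\ref{pertur2}, and you have checked exactly the points needed (the identification of $*$-frames with $*$-$\mathrm{id}_{\mathcal H}$-frames and the trivial verification of the closed-range and range-inclusion hypotheses). Your remark that the middle coefficient should be read as $\beta$ and that $\|K^*f\|$ becomes $\|f\|$ also correctly resolves the evident misprints in the statement.
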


\end{document}